\documentclass[12pt]{amsart}

\usepackage{amsmath,amssymb,amsthm,mathrsfs}

\usepackage{lmodern}

\usepackage{geometry}
\usepackage{hyperref}
\usepackage{enumitem}

\usepackage{mathtools}
\mathtoolsset{showonlyrefs=true}
\usepackage{booktabs}

\hypersetup{
  colorlinks=true,
  linkcolor=blue,
  citecolor=blue,
  urlcolor=blue
}

\newcommand{\Q}{\mathbb{Q}}
\newcommand{\Z}{\mathbb{Z}}
\newcommand{\F}{\mathbb{F}}

\newcommand{\OO}{\mathcal{O}}
\newcommand{\Gal}{\mathrm{Gal}}

\newcommand{\Norm}{\mathrm{N}}

\newcommand{\leg}[2]{\left(\frac{#1}{#2}\right)}
\newcommand{\Hilb}[2]{\left(#1,#2\right)}
\newcommand{\eps}{\varepsilon}

\newcommand{\doi}[1]{\href{https://doi.org/#1}{doi:#1}}

\theoremstyle{plain}
\newtheorem{theorem}{Theorem}[section]
\newtheorem{proposition}[theorem]{Proposition}
\newtheorem{lemma}[theorem]{Lemma}
\newtheorem{corollary}[theorem]{Corollary}

\theoremstyle{definition}
\newtheorem{definition}[theorem]{Definition}
\newtheorem{remark}[theorem]{Remark}
\newtheorem{example}[theorem]{Example}

\def\DD{D\kern-.7em\raise0.4ex\hbox{\char '55}\kern.33em}

\title[Local certification of residual squareclasses]
{Local certification of residual squareclasses\\ in $\Q(\sqrt{2},\sqrt{pq},\sqrt{ps})$: one-bit, affine,\\ and finite-choice Hilbert-symbol frameworks}

\author{\DD\d{\u a}ng V\~o Ph\'uc}
\address{Department of Mathematics, FPT University, Quy Nhon AI Campus, An Phu Thinh New Urban Area, Vietnam}
\email{dangphuc150488@gmail.com}
\thanks{ORCID: \url{https://orcid.org/0000-0002-6885-3996}}

\subjclass[2020]{11R27, 11R32, 11R80}
\keywords{Multiquadratic fields, fundamental units, Hilbert symbols, squareclasses, affine squareclass cosets, local square criterion}

\begin{document}

\begin{abstract}
Recent works of El Hamam described explicit fundamental systems of units for several families of multiquadratic fields of degrees
$8$ and $16$. In the degree-$8$ field
\[
L^{+}=\Q(\sqrt2,\sqrt{pq},\sqrt{ps}),
\]
the corrected classification still leaves a residual binary indeterminacy: one must decide which of two explicitly constructed squareclasses gives the final unit generator. In this paper, we make this remaining bit explicit.

First, we give an explicit local criterion deciding the parameter $\mu\in\{1,\eps_{pq}\}$ left open in \cite{ElHamam2025Erratum}. The criterion is first expressed in terms of Hilbert symbols at a single finite place, and is then sharpened to a residue criterion at a chosen split auxiliary rational prime. Second, we show that the standard residue datum
\[
\mathcal D(p,q,s)=\bigl(p\bmod 8,\ q\bmod 8,\ s\bmod 8,\ \leg{q}{p},\ \leg{s}{p},\ \leg{q}{s}\bigr)
\]
does not determine the final generator: we compute explicit triples with the same $\mathcal D(p,q,s)$ but opposite values of the residual bit. Third, we place the one-bit problem inside a hierarchy of local-certification results in $K^\times/K^{\times2}$: besides the linear residual-choice statement, we prove an affine local-certification theorem for residual-choice cosets and a finite-test-set separation theorem for arbitrary finite candidate families.

\medskip

\end{abstract}

\maketitle

\tableofcontents

\subsection*{Acknowledgments}

Heartfelt thanks are due to the anonymous referees for their rigorous examination of the paper and their insightful observations on its mathematical content.


\section{Introduction}

The explicit determination of fundamental systems of units (FSU) in number fields is a classical problem with a long history.
While Dirichlet's unit theorem determines the abstract structure of the unit group, the explicit construction of generators remains difficult in degrees greater than~$4$.
For multiquadratic extensions, however, the Galois structure makes it possible to exploit norm relations and squareclass linear algebra, beginning with the early work of Varmon \cite{Varmon1925}, Kubota \cite{Kubota1956}, and Wada \cite{Wada1966}.

More recently, the study of unit groups in multiquadratic fields has developed in several directions; see, for example,
Benjamin, Lemmermeyer, and Snyder \cite{BenjaminLemmermeyerSnyder2007},
Chems--Eddin, Azizi, and Zekhnini \cite{ChemsAziziZekhnini2021BSMM},
Chems--Eddin \cite{ChemsEddin2022PMH,ChemsEddin2023MB},
Chems--Eddin, El Hamam, and Hajjami \cite{ChemsEddinElHamamHajjami2024},
Chems--Eddin and El Mamry \cite{ChemsEddinElMamry2025},
and El Hamam \cite{ElHamam2024MB,ElHamam2025PMH,ElHamam2025Ricerche,ElHamam2025Erratum}.
Within this development, a sequence of recent papers by Chems--Eddin, El Hamam, and their collaborators has produced explicit formulas for unit groups and $2$-class groups in several degree-$8$ and degree-$16$ multiquadratic families
\cite{ChemsAziziZekhnini2021BSMM,ChemsEddin2022PMH,ChemsEddin2023MB,ChemsEddinElHamamHajjami2024,ChemsEddinElMamry2025,ElHamam2024MB,ElHamam2025PMH,ElHamam2025Ricerche,ElHamam2025Erratum}.

For the family relevant here, the corrected treatment in \cite{ElHamam2025Ricerche,ElHamam2025Erratum} determines the unit group of the totally real octic field
\[
L^{+}=\Q(\sqrt2,\sqrt{pq},\sqrt{ps}),
\qquad [L^{+}:\Q]=8,
\]
and also treats the associated CM field
\[
L=L^{+}(\sqrt{-\ell}),
\]
where $\ell$ is odd, squarefree, and coprime to $pqs.$
The present paper is concerned primarily with the totally real field $L^{+}$.

The point of departure is that the corrected degree-$8$ analysis of \cite{ElHamam2025Erratum} still leaves one residual square test unresolved.
More precisely, fix once and for all the real embedding
\[
\iota_0:L^{+}\hookrightarrow \mathbb{R},
\qquad
\sqrt2,\sqrt{pq},\sqrt{ps}\longmapsto +\sqrt2,+\sqrt{pq},+\sqrt{ps},
\]
and interpret each symbol $\sqrt{\eps_d\eps_{2d}}$ (for $d\in\{pq,ps\}$) as the unique square root in $L^{+}$
that is positive under $\iota_0$.
Put
\[
\Theta:=\sqrt{\eps_{pq}\eps_{2pq}}\cdot \sqrt{\eps_{ps}\eps_{2ps}}.
\]
Under one of the two residue patterns of \cite[Thm.~3.1(1)]{ElHamam2025Erratum},
\[
\bigl(\leg{q}{p},\leg{s}{p},\leg{q}{s}\bigr)=(1,1,1)
\qquad\text{or}\qquad
\bigl(\leg{q}{p},\leg{s}{p},\leg{q}{s}\bigr)=(-1,-1,1),
\]
there exists
\[
\mu\in\{1,\eps_{pq}\}
\]
such that the final unit generator has the form
\[
\xi=\sqrt{\mu\Theta},
\]
and exactly one of $\Theta$ and $\eps_{pq}\Theta$ is a square in $L^{+}$.
Thus the corrected classification is explicit up to a single residual binary choice.

While previous studies \cite{ElHamam2025Ricerche, ElHamam2025Erratum} provided a structural classification, they left a binary indeterminacy in the unit generator. The main goal of this work is to provide the first explicit local-global certification that completely resolves this ``one-bit'' indeterminacy. We do so by giving a local criterion that determines whether $\Theta$ or $\eps_{pq}\Theta$ is the global square. This yields an explicit rule for the residual parameter $\mu$ and hence eliminates the last indeterminacy in the degree-8 description of the unit group in the above residue pattern.

A second point of the paper is that this residual bit is genuinely new arithmetic information.
More precisely, it is not determined by the standard congruence and Legendre-symbol data
\[
\mathcal D(p,q,s)=\bigl(p\bmod 8,\ q\bmod 8,\ s\bmod 8,\ \leg{q}{p},\ \leg{s}{p},\ \leg{q}{s}\bigr).
\]
We make this failure of determination explicit by exhibiting two triples with the same $\mathcal D(p,q,s)$ but opposite values of the residual bit.

Finally, we record a hierarchy of local-certification results for residual-choice spaces in
$K^\times/K^{\times2}$.
The natural object produced by norm-table calculations is often an affine coset rather than a linear subspace, so we formulate both linear and affine certification theorems, and we also show that even an arbitrary finite candidate family can be separated by finitely many local Hilbert test vectors.
These results isolate the squareclass-duality mechanism behind the one-bit argument for $L^{+}$ and explain why this kind of local certification is natural from the point of view of squareclass linear algebra.

\medskip
\noindent\textbf{Main results.}
The contributions of this paper are as follows.

\smallskip\noindent
\textbf{(1) A Hilbert-symbol criterion and a split-prime residue criterion.}
We give an effective local Hilbert-symbol criterion deciding whether $\Theta$ or $\eps_{pq}\Theta$ is the global square.
We also show that, once a split auxiliary prime $t\nmid 2pqs$ and a place $w\mid t$ with
$\eps_{pq}\notin K_w^{\times2}$ have been chosen, the value of $\mu$ is determined by a single Legendre symbol $\leg{\widetilde{\Theta}_{w}}{t},$
where $\widetilde{\Theta}_{w}\in \F_t^\times$ is the residue of $\Theta$ at~$w$.

\smallskip\noindent
\textbf{(2) Failure of classification by $\mathcal D(p,q,s)$ alone.}
We define the residual bit
\[
\delta(p,q,s)\in\{0,1\},
\qquad
\mu=\eps_{pq}^{\,\delta(p,q,s)},
\]
and show by explicit computations that $\delta$ is not a function of the usual residue data
$\mathcal D(p,q,s)$.
Concretely, we exhibit two triples with the same $\mathcal D(p,q,s)$ but opposite values of $\delta$.

\smallskip\noindent
\textbf{(3) Linear, affine, and finite-choice local certification.}
Let $K$ be a number field and let $W\subset K^\times/K^{\times2}$ be an $\F_2$-subspace of dimension $r$ arising as a residual-choice space in a norm-table calculation.
We first record a local-duality statement showing that one can choose $r$ local Hilbert-symbol functionals whose restrictions to $W$ form a basis of $W^\vee$.
We then pass to affine cosets $u_0+W$, which is the form most directly relevant to residual-choice problems, and prove that the $2^r$ candidates in such a coset are determined by $r$ local bits.
Finally, we show that any finite candidate family in $K^\times/K^{\times2}$ can be separated by finitely many local Hilbert test vectors.
The one-bit criterion for $L^{+}$ is the affine case $r=1$ with candidate set $\{[\Theta],[\eps_{pq}\Theta]\}$.

\medskip
\noindent\textbf{About the CM field.}
Recall that a CM field is a totally imaginary quadratic extension of a totally real number field.
In our setting, $L^{+}$ is totally real and
\[
L=L^{+}(\sqrt{-\ell})
\]
is a totally imaginary quadratic extension of $L^{+}$; hence $L$ is a CM field.
While the main arithmetic application of this paper concerns $L^{+}$, the same local-certification principle applies equally to residual-choice spaces arising in the associated CM field $L$.

\medskip
\noindent\textbf{Outline.}
Section~\ref{sec:prelim} recalls the basic material on unit groups, squareclasses, and Hilbert symbols.
Section~\ref{sec:biquadratic} records the biquadratic input used to assemble explicit generators.
Section~\ref{sec:revisit} recalls the corrected degree-$8$ statement of \cite{ElHamam2025Erratum} and proves the single-place Hilbert-symbol criterion.
Section~\ref{sec:residue} strengthens this to a split-prime residue criterion.
Section~\ref{sec:localdata} defines the refined local datum $\mathcal D^\sharp(p,q,s)$ and proves that $\mathcal D(p,q,s)$ alone is insufficient.
Section~\ref{sec:general} records linear, affine, and finite-choice local-certification principles.
Finally, Section~\ref{sec:examples} works out several explicit numerical examples in full detail.

\section{Preliminaries}\label{sec:prelim}

\subsection{Units and fundamental systems of units}

Let $K$ be a number field with ring of integers $\OO_K$.
Write $E_K=\OO_K^\times$ for its unit group.
By Dirichlet's unit theorem,
\[
E_K \ \simeq\  \mu(K)\times \Z^{r},\qquad r=r_1+r_2-1,
\]
where $\mu(K)$ is the finite group of roots of unity in $K$, and $(r_1,r_2)$ is the signature of $K$.

\begin{definition}\label{def:fsu}
A \emph{fundamental system of units} (FSU) of $K$ is a set of $r$ units
$\{\eta_1,\dots,\eta_r\}\subset E_K$ such that
\[
E_K \;=\; \mu(K)\cdot \langle \eta_1,\dots,\eta_r\rangle .
\]
\end{definition}

In this paper we often work modulo squares.
If $\{\eta_1,\dots,\eta_r\}$ is an FSU, then the images of the $\eta_i$ in
$E_K/E_K^2$ generate the free part modulo squares (together with the class of $-1$
in the totally real cases considered below).
The converse requires care:
if $r$ units map to generators of the free part of $E_K/E_K^2$, then they generate a subgroup of
odd index in $E_K/\mu(K)$, but not necessarily index~$1$.
Thus squareclass calculations determine residual candidates up to odd index; an additional
index argument is still needed when one wants an actual FSU.

\subsection{Hilbert symbols and square tests}

Let $F$ be a local field of characteristic $0$ and let $v$ denote its place.
The (quadratic) Hilbert symbol is the bilinear map
\[
\Hilb{\cdot}{\cdot}_v:\ F^\times/F^{\times2}\times F^\times/F^{\times2}\longrightarrow \{\pm 1\}.
\]
It is characterized by the norm criterion
\[
\Hilb{a}{b}_v=1\quad \Longleftrightarrow\quad a\in N_{F(\sqrt b)/F}\big(F(\sqrt b)^\times\big),
\]
and, more importantly for this paper, it induces a perfect pairing of finite $\F_2$-vector spaces
\[
F^\times/F^{\times2}\times F^\times/F^{\times2}\longrightarrow \{\pm1\},
\qquad
(\overline a,\overline b)\mapsto \Hilb{a}{b}_v,
\]
hence is nondegenerate in each variable \cite[Ch.~XIV, \S1--\S2]{SerreLocalFields}.

\smallskip
Fix a finite subset $\mathcal B\subset F^\times$ mapping to an $\F_2$-basis of $F^\times/F^{\times2}$.
By nondegeneracy, an element $a\in F^\times$ is a square if and only if its Hilbert symbol against every basis element
is trivial:
\begin{equation}\label{eq:local-square-by-hilbert}
a\in F^{\times2}
\quad\Longleftrightarrow\quad
\Hilb{a}{b}_v=1\ \ \text{for all } b\in\mathcal B.
\end{equation}
We will apply \eqref{eq:local-square-by-hilbert} with $F=K_w$ to reduce the remaining global indeterminacy in
Subsection~\ref{sec:hilbert} to finitely many explicit local computations.

\smallskip
For explicit formulas for $\Hilb{\cdot}{\cdot}_v$ in $\Q_\ell$ and in finite extensions of $\Q_\ell$ (used later to
express the outcome in terms of congruence and Legendre data), see \cite[Ch.~XIV, \S4]{SerreLocalFields}.

\section{Biquadratic input and basic generators}\label{sec:biquadratic}

Throughout, $p,q,s$ denote distinct odd primes and we set
\[
L^{+}=\Q(\sqrt2,\sqrt{pq},\sqrt{ps}).
\]
We also consider the three real biquadratic subfields
\[
L_1=\Q(\sqrt2,\sqrt{pq}),\qquad
L_2=\Q(\sqrt2,\sqrt{ps}),\qquad
L_3=\Q(\sqrt2,\sqrt{qs}).
\]
Each $L_i$ has degree $4$ and unit rank $3$, while $L^{+}$ has degree $8$ and unit rank $7$.

For a squarefree $d>1$, let $\eps_d$ denote a fundamental unit of $\Q(\sqrt d)$.
We keep track of the sign of its norm:
\[
\Norm_{\Q(\sqrt d)/\Q}(\eps_d)\in\{\pm 1\}.
\]
Negative norm is equivalent to solvability of the negative Pell equation $x^2-dy^2=-1$; see any standard reference.

We will use the following reformulation of a classical result of Kubota \cite{Kubota1956}
for real biquadratic fields: units from the quadratic subfields
generate a subgroup of index at most $2$, and the possible extra generator is detected by a
single square test. We include a streamlined proof (using Kubota's classification) in the form needed later.

\begin{theorem}\label{thm:biquad}
Let $F=\Q(\sqrt{a},\sqrt{b})$ be a real biquadratic field with $a,b>1$ squarefree and $ab$ squarefree.
Write $F_1=\Q(\sqrt a)$, $F_2=\Q(\sqrt b)$ and $F_3=\Q(\sqrt{ab})$.
Let $U=\langle E_{F_1},E_{F_2},E_{F_3}\rangle\subseteq E_F$ be the subgroup generated by units of the quadratic subfields.
Then $[E_F:U]\in\{1,2\}$.
Moreover:
\begin{enumerate}[label=(\alph*), leftmargin=2.4em]
\item $[E_F:U]=2$ if and only if there exists a unit $\theta\in E_F$ such that $\theta^2\in U$ but $\theta\notin U$.
\item In that case one may take $\theta=\sqrt{\eps_a^{e_1}\eps_b^{e_2}\eps_{ab}^{e_3}}$ for some $e_i\in\{0,1\}$,
and then $E_F=U\cdot \langle \theta\rangle$.
\end{enumerate}
\end{theorem}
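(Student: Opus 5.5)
The plan is the standard norm-map argument. Write $\sigma_1,\sigma_2,\sigma_3$ for the nontrivial elements of $\Gal(F/\Q)$, with $\sigma_i$ fixing $F_i$. For any $\eta\in E_F$ we have the identity
\[
\eta^2=\frac{\Norm_{F/F_1}(\eta)\,\Norm_{F/F_2}(\eta)}{\sigma_3(\Norm_{F/F_3}(\eta))}\cdot \Norm_{F/\Q}(\eta)^{-1}\ \text{(up to the precise arrangement)}.
\]
More cleanly, $\eta\cdot\sigma_1\eta\cdot\eta\cdot\sigma_2\eta\cdot\eta\cdot\sigma_3\eta=\eta^2\cdot \Norm_{F/\Q}(\eta)$, and $\Norm_{F/\Q}(\eta)=\pm1$. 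This gives $\eta^2=\pm\,\Norm_{F/F_1}(\eta)\Norm_{F/F_2}(\eta)\Norm_{F/F_3}(\eta)\in U$. Hence $E_F^2\subseteq U$, so $E_F/U$ is an elementary abelian $2$-group. It is also finite, since $U$ has full rank $3$ (the $\eps_a,\eps_b,\eps_{ab}$ are multiplicatively independent by a regulator/sign-pattern check under the Galois action).

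To show that the index is at most $2$ rather than merely a power of $2$, I will invoke Kubota's classification \cite{Kubota1956}, as the statement announces. Modulo squares, $E_F/U$ injects into $U\cap E_F^2$ modulo $U^2$ via $\eta\mapsto\eta^2$. So the task is to find which products $\pm\eps_a^{e_1}\eps_b^{e_2}\eps_{ab}^{e_3}$ can be squares in $F$.

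The sign $-1$ is excluded because $F$ is real. I then use the relative norms. If $x=\eps_a^{e_1}\eps_b^{e_2}\eps_{ab}^{e_3}=\theta^2$, apply $\Norm_{F/F_i}$: $\Norm_{F/F_i}(x)=\Norm_{F/F_i}(\theta)^2$ must be a square in $F_i$. This constrains the exponents via the norms $\Norm(\eps_d)=\pm1$ and the known fact that $\eps_d$ is a square in $F$ only in the Kubota-type situations. The key step, and the main obstacle, is excluding two independent nontrivial classes $x,x'$ that are both squares. For this I plan to cite Kubota's case table directly rather than reprove it: in every type, the group $(U\cap E_F^2)/U^2$ has order at most $2$, i.e.\ $Q=[E_F:U]\in\{1,2\}$ for real biquadratic fields (Kubota's unit index). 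If a self-contained argument is wanted, I would try the following. Suppose two independent such $x$ existed. Then their product, or one of them, involves a single $\eps_d$ to an odd power against a square norm condition in some $F_i$. Taking $\Norm_{F/F_i}$ would force some $\eps_d$ or $-\eps_d$ to be a square in $F_i$, contradicting fundamentality.

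With $[E_F:U]\le2$ in hand, parts (a) and (b) are formal. For (a), if the index is $2$, pick any $\theta\in E_F\setminus U$; then $\theta^2\in U$ by the first step. Conversely, such a $\theta$ makes the index nontrivial, hence equal to $2$. For (b), write $\theta^2=\pm\eps_a^{f_1}\eps_b^{f_2}\eps_{ab}^{f_3}$. Reducing the $f_i$ mod $2$ by dividing $\theta$ by an element of $U$ keeps $\theta\notin U$, and the sign is $+$ since $F$ is real and $\theta^2>0$. After possibly replacing $\theta$ by $-\theta$, this gives $\theta=\sqrt{\eps_a^{e_1}\eps_b^{e_2}\eps_{ab}^{e_3}}$ with $e_i\in\{0,1\}$. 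Finally, $E_F=U\langle\theta\rangle$ because the index is $2$.
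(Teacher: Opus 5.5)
Your reductions are sound. The identity $\eta^2\,\Norm_{F/\Q}(\eta)=\Norm_{F/F_1}(\eta)\,\Norm_{F/F_2}(\eta)\,\Norm_{F/F_3}(\eta)$ gives $E_F^2\subseteq U$. Squaring then induces an isomorphism $E_F/U\cong (U\cap E_F^2)/U^2$, injective because $-1\in U$. Once $|E_F/U|\le 2$ is known, your derivations of (a) and (b) are fine.

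The gap is exactly that bound, and it cannot be filled, because it is false for real biquadratic fields: Kubota's unit index $Q=[E_F:U]$ takes the values $1,2,4$. For $F=\Q(\sqrt2,\sqrt3)$,
\[
\eps_3=2+\sqrt3=\Bigl(\frac{\sqrt2+\sqrt6}{2}\Bigr)^2,\qquad \eps_6=5+2\sqrt6=\bigl(\sqrt2+\sqrt3\bigr)^2 .
\]
So $\eps_3$, $\eps_6$, $\eps_3\eps_6$ give three distinct nontrivial classes in $(U\cap E_F^2)/U^2$, since $\eps_2,\eps_3,\eps_6$ are independent. Hence $[E_F:U]\ge 4$; in fact $E_F=\{\pm1\}\times\langle \eps_2,\sqrt{\eps_3},\sqrt{\eps_6}\rangle$ and $Q=4$. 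In this field $\theta=\sqrt{\eps_6}$ satisfies $\theta^2\in U$ and $\theta\notin U$, yet the index is not $2$. So the `if' direction of (a) fails as well.

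Neither of your ways of closing the step works.
\begin{itemize}
\item \emph{Kubota's table.} It does not give $Q\le 2$: up to relabelling it contains types with two square roots, e.g.\ $\{\eps_a,\sqrt{\eps_b},\sqrt{\eps_{ab}}\}$, which is exactly the case above.
\item \emph{Relative norms.} The test is too weak. For $x=\eps_a^{e_1}\eps_b^{e_2}\eps_{ab}^{e_3}$ one has $\Norm_{F/F_1}(x)=\eps_a^{2e_1}\,\Norm(\eps_b)^{e_2}\,\Norm(\eps_{ab})^{e_3}$, and similarly for $F_2,F_3$. Requiring these to be squares only constrains the signs $\Norm(\eps_d)=\pm1$. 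It is vacuous when all three norms are $+1$, as for $\eps_3,\eps_6$ above.
\end{itemize}

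The paper's own proof takes the same route and has the same defect. Its Step~1 lists only single-root types. It then deduces $[E_F:U]\le 2$ from the fact that each coset $\theta U$ with $\theta^2\in U$ has order at most $2$, which only shows that $E_F/U$ is elementary abelian.

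What your argument does prove is that $E_F/U\cong (U\cap E_F^2)/U^2$ is an elementary abelian $2$-group, of order at most $4$ by Kubota. The correct statement replaces $\{1,2\}$ by $\{1,2,4\}$ and reads (a) as a criterion for $[E_F:U]>1$. In general it needs up to two extra generators of the form $\sqrt{\eps_a^{e_1}\eps_b^{e_2}\eps_{ab}^{e_3}}$.
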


\begin{proof}
Put
\[
U=\langle E_{F_1},E_{F_2},E_{F_3}\rangle\subseteq E_F,
\qquad
q(F):=[E_F:U].
\]
Kubota denotes this unit index by
\[
Q=(e:e_1e_2e_3),
\]
where $e=E_F$ is the full unit group of the biquadratic field and $e_i=E_{F_i}$ are the unit groups of the
three quadratic subfields; see \cite[p.~80, Satz~5]{Kubota1956}. In particular, $Q=q(F)$ in our notation.

\smallskip
\noindent\textbf{Step 1: why $q(F)\in\{1,2\}$.}
Kubota proved that a fundamental system of units of a real biquadratic field is always of one of seven explicit
types \cite[Satz~1]{Kubota1956}. Concretely, one may choose an FSU of $F$ of the form
\[
(\eta_1,\eta_2,\eta_3),
\]
where either $\eta_i=\eps_a,\eps_b,\eps_{ab}$ (so all generators come from the quadratic subfields), or exactly
one generator is replaced by a square root of a product of them, e.g.
\[
(\sqrt{\eps_a},\eps_b,\eps_{ab}),\quad
(\sqrt{\eps_a\eps_b},\eps_b,\eps_{ab}),\quad
(\sqrt{\eps_a\eps_b\eps_{ab}},\eps_b,\eps_{ab}),
\]
and similarly for the other possibilities in the list.
In all cases, the subgroup generated by $E_{F_1},E_{F_2},E_{F_3}$ has index at most $2$ in $E_F$:
indeed, if $\theta\in E_F$ satisfies $\theta^2\in U$, then $(\theta U)^2=U$ in $E_F/U$, so $\theta U$ has order $\le 2$ and thus
$[\langle U,\theta\rangle:U]=|\langle \theta U\rangle|\le 2$.
Therefore $q(F)=[E_F:U]\in\{1,2\}$.

\smallskip
\noindent\textbf{Step 2: proof of (a).}
Assume first that $[E_F:U]=2$. Then the quotient group $E_F/U$ has order $2$, so every nontrivial coset has
order $2$. Pick any $\theta\in E_F\setminus U$. Then $\theta U$ is the nontrivial element of $E_F/U$, hence
$(\theta U)^2=U$, i.e.\ $\theta^2\in U$.

Conversely, suppose there exists $\theta\in E_F$ such that $\theta\notin U$ but $\theta^2\in U$.
Then $E_F\neq U$, so $[E_F:U]\neq 1$. Since Step~1 gives $[E_F:U]\in\{1,2\}$, we must have $[E_F:U]=2$.
This proves (a).

\smallskip
\noindent\textbf{Step 3: proof of (b).}
Each $F_i$ is real quadratic, hence
\[
E_{F_i}=\{\pm \eps_{d}^{\,n}:n\in\Z\}\qquad(d\in\{a,b,ab\}).
\]
It follows that $U$ is generated by $-1$ together with $\eps_a,\eps_b,\eps_{ab}$, and therefore
\[
U/U^2=\big\langle \overline{-1},\overline{\eps_a},\overline{\eps_b},\overline{\eps_{ab}}\big\rangle
\]
is an $\F_2$-vector space generated by these squareclasses. Hence every class in $U/U^2$ can be written as
\[
\overline{-1}^{\,e_0}\,\overline{\eps_a}^{\,e_1}\,\overline{\eps_b}^{\,e_2}\,\overline{\eps_{ab}}^{\,e_3}
\qquad(e_i\in\{0,1\}).
\]
Now, if $\theta\in E_F$ then $\theta^2$ is positive in every real embedding of $F$. Under the identity embedding
$(\sqrt a,\sqrt b)\mapsto(+\sqrt a,+\sqrt b)$ we have $\eps_a,\eps_b,\eps_{ab}>0$, so the factor $(-1)^{e_0}$ would
force $\theta^2<0$ at this embedding when $e_0=1$. Hence the squareclass of $\theta^2$ has $e_0=0$, and for
$\theta^2\in U$ we may write its class in $U/U^2$ using only $\overline{\eps_a},\overline{\eps_b},\overline{\eps_{ab}}$.

Now assume $[E_F:U]=2$ and choose $\theta\in E_F\setminus U$ with $\theta^2\in U$ (by (a)).
Consider the class of $\theta^2$ in $U/U^2$. By the previous paragraph, there exist $e_i\in\{0,1\}$ such that
\[
\theta^2\equiv \eps_a^{e_1}\eps_b^{e_2}\eps_{ab}^{e_3}\pmod{U^2},
\]
i.e.\ there exists $u\in U$ with
\[
\theta^2 = u^2\,\eps_a^{e_1}\eps_b^{e_2}\eps_{ab}^{e_3}.
\]
Replace $\theta$ by $\theta u^{-1}\in E_F$. This replacement does not change the coset $\theta U$
(hence still $\theta\notin U$), and it yields
\[
(\theta u^{-1})^2=\eps_a^{e_1}\eps_b^{e_2}\eps_{ab}^{e_3}.
\]
Renaming $\theta u^{-1}$ as $\theta$, we obtain
\[
\theta=\sqrt{\eps_a^{e_1}\eps_b^{e_2}\eps_{ab}^{e_3}},
\]
as claimed. Finally, since $[E_F:U]=2$ and $\theta\notin U$, we have $E_F=U\cdot\langle\theta\rangle$.
\end{proof}

\begin{remark}
Theorem~\ref{thm:biquad} is the conceptual place where the ``single square-test'' appears:
in a biquadratic field, there is at most one extra generator beyond the quadratic subfield units.
Our main field $L^{+}$ is a compositum of three such biquadratic fields over $\Q(\sqrt2)$, and the same
philosophy persists: the unit group is controlled by a small number of square tests, each reducible to local data.
\end{remark}

\section{A local criterion for the remaining square test}\label{sec:revisit}

In this section, we recall the corrected degree-$8$ analysis of
\cite{ElHamam2025Erratum} for the field
$L^{+}=\Q(\sqrt2,\sqrt{pq},\sqrt{ps})$ and isolate the final undetermined square test, formulated in terms of an explicit element $\Theta$.
We then give an explicit local criterion, expressed via Hilbert symbols, which decides this square test and thereby replaces the parameter $\mu\in\{1,\eps_{pq}\}$ by an explicit local rule.

\subsection{The congruence pattern and the imported degree-$8$ input}

We work in the exact residue configurations appearing in \cite[Thm.~3.1]{ElHamam2025Erratum}, namely
\begin{equation}\label{eq:erratum-hyp}
p\equiv 7\pmod 8,\qquad q\equiv s\equiv 3\pmod 8,
\end{equation}
together with one of the two Legendre-symbol patterns
\begin{equation}\label{eq:example-branches}
\bigl(\leg{q}{p},\leg{s}{p},\leg{q}{s}\bigr)=(1,1,1)
\qquad\text{or}\qquad
\bigl(\leg{q}{p},\leg{s}{p},\leg{q}{s}\bigr)=(-1,-1,1).
\end{equation}

Let
\[
\iota_0:L^{+}\hookrightarrow \mathbb{R},
\qquad
\sqrt2,\sqrt{pq},\sqrt{ps}\longmapsto +\sqrt2,+\sqrt{pq},+\sqrt{ps},
\]
be the distinguished real embedding.
By the proof of \cite[Thm.~3.1]{ElHamam2025Erratum}, both
\[
\eps_{pq}\eps_{2pq}
\qquad\text{and}\qquad
\eps_{ps}\eps_{2ps}
\]
are squares in $L^{+}$.
For $d\in\{pq,ps\}$, we henceforth write
\[
\sqrt{\eps_d\eps_{2d}}
\]
for the unique square root in $L^{+}$ whose image under $\iota_0$ is positive.
In particular, we define
\begin{equation}\label{eq:Theta-normalized}
\Theta:=\sqrt{\eps_{pq}\eps_{2pq}}\cdot \sqrt{\eps_{ps}\eps_{2ps}}\in L^{+\times}.
\end{equation}

With this normalization, the real-field part of \cite[Thm.~3.1(1)]{ElHamam2025Erratum} may be read as follows.

\begin{proposition}\label{thm:erratum}
Assume \eqref{eq:erratum-hyp} and one of the two residue patterns in \eqref{eq:example-branches}.
Then:
\begin{enumerate}[label=(\alph*), leftmargin=2.4em]
\item exactly one of $\Theta$ and $\eps_{pq}\Theta$ is a square in $L^{+}$;
\item there exists a unique $\mu\in\{1,\eps_{pq}\}$ such that $\mu\Theta\in L^{+\times2}$;
\item if $\xi\in L^{+\times}$ satisfies $\xi^2=\mu\Theta$, then the seven units
\[
\eta_1=\eps_2,\quad
\eta_2=\eps_{pq},\quad
\eta_3=\sqrt{\eps_{pq}\eps_{ps}},\quad
\eta_4=\sqrt{\eps_{pq}\eps_{qs}},\quad
\eta_5=\sqrt{\eps_{2qs}},\quad
\eta_6=\sqrt{\eps_{pq}\eps_{2pq}},\quad
\eta_7=\xi
\]
form a fundamental system of units of $L^{+}$, and
\[
E_{L^{+}}=\{\pm1\}\cdot\langle \eta_1,\dots,\eta_7\rangle.
\]
\end{enumerate}
\end{proposition}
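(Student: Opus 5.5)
This proposition is a reformulation of \cite[Thm.~3.1(1)]{ElHamam2025Erratum}, so the plan is to import that result and translate it into the normalization \eqref{eq:Theta-normalized}. The only genuinely new points are the sign convention fixed by $\iota_0$ and the uniqueness statements in (a) and (b).

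First I will read off from the proof of \cite[Thm.~3.1]{ElHamam2025Erratum}, under \eqref{eq:erratum-hyp} and \eqref{eq:example-branches}, what it gives. It shows that $\eps_{pq}\eps_{2pq}$ and $\eps_{ps}\eps_{2ps}$ are squares in $L^{+}$. It also shows that the seventh generator is a square root of $\mu\sqrt{\eps_{pq}\eps_{2pq}}\sqrt{\eps_{ps}\eps_{2ps}}$ for some $\mu\in\{1,\eps_{pq}\}$, with the square roots taken up to sign. I must check that fixing the positive square roots under $\iota_0$ loses nothing. Changing the signs of the two square roots multiplies $\Theta$ by $\pm1$. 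The class $-\Theta$, and likewise $-\eps_{pq}\Theta$, can never be a square, because it is negative under $\iota_0$: here $\Theta>0$ and $\eps_{pq}>0$ under $\iota_0$. So the square candidate in the Erratum is necessarily the positive normalization, and it coincides with $\Theta$ or $\eps_{pq}\Theta$.

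Next come (a) and (b). The Erratum gives existence: at least one of $\Theta$ and $\eps_{pq}\Theta$ is a square. For uniqueness, suppose both were squares. Then their quotient $\eps_{pq}$ would be a square in $L^{+}$. That is impossible under \eqref{eq:erratum-hyp}, because $\eta_2=\eps_{pq}$ is part of an FSU. More intrinsically, I can argue this directly: if $\sqrt{\eps_{pq}}\in L^{+}$, then the biquadratic field $L_1=\Q(\sqrt2,\sqrt{pq})$ would contain $\sqrt{\eps_{pq}}$ or else $\sqrt{\eps_{pq}}$ would generate a quadratic extension of $\Q(\sqrt{pq})$ inside $L^{+}$. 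The known square-root behavior $\eps_{pq}\sim pq$-type squareclasses for $p\equiv7$, $q\equiv3\pmod 8$, namely $2\eps_{pq}$ or $p\eps_{pq}$ being a square in $\Q(\sqrt{pq})$, excludes this. I would cite this computation from the Erratum rather than redo it. Hence exactly one of the two is a square, and $\mu$ is unique.

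For (c), once $\xi^2=\mu\Theta$ is fixed, the list $\eta_1,\dots,\eta_7$ is exactly the FSU in \cite[Thm.~3.1(1)]{ElHamam2025Erratum}, rewritten with $\Theta$. Replacing $\xi$ by $-\xi$ does not affect the statement, because $-1$ is included as a torsion factor. The main obstacle is bookkeeping: I must check that the Erratum's generator really matches $\sqrt{\mu\Theta}$ in our normalization, rather than, say, $\sqrt{\mu\,\eps_{pq}\eps_{2pq}\sqrt{\cdots}}$ in a different form. I also need the Erratum's index computation, which shows that this squareclass-level FSU has full index and not merely odd index (cf.\ the caveat after Definition~\ref{def:fsu}), to remain valid verbatim. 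I would handle this by tracing the norm table of the Erratum and confirming that only the squareclass of the seventh generator enters.
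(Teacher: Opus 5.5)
Your plan matches the paper, which gives no independent proof of Proposition~\ref{thm:erratum}. The paper reads the statement off \cite[Thm.~3.1(1)]{ElHamam2025Erratum}, and its only added argument is the sign normalization of Remark~\ref{rem:theta-xi}, which is exactly your $\iota_0$-positivity argument. Your derivation of uniqueness in (a)--(b) from the fact that $\eta_2=\eps_{pq}$ lies in the imported FSU of (c) is also valid; the paper simply imports ``exactly one'' from the Erratum as well.

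The one thing to fix is your ``more intrinsic'' aside, which is wrong as written.

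\begin{itemize}
\item If $\sqrt{\eps_{pq}}\in L^{+}$, then $\Q(\sqrt{pq},\sqrt{\eps_{pq}})$ is one of the three quadratic extensions $\Q(\sqrt{pq},\sqrt2)$, $\Q(\sqrt{pq},\sqrt{ps})$, $\Q(\sqrt{pq},\sqrt{2ps})$ of $\Q(\sqrt{pq})$ inside $L^{+}$. By Kummer theory, $c\,\eps_{pq}\in\Q(\sqrt{pq})^{\times2}$ for some $c\in\{2,ps,2ps\}$.
\item The dichotomy you quote therefore does not exclude this case. Its first alternative, $2\eps_{pq}\in\Q(\sqrt{pq})^{\times2}$, would \emph{force} $\sqrt{\eps_{pq}}=\sqrt{2\eps_{pq}}/\sqrt2\in L_1\subset L^{+}$.
\item A correct direct argument is ideal-theoretic. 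The relation $c\,\eps_{pq}=\alpha^2$ would make $(c)$ the square of a fractional ideal of $\Q(\sqrt{pq})$. This is impossible, because $2$ and $s$ are unramified there (as $pq\equiv5\pmod 8$ and $s\nmid pq$). Each $c$ involves $2$ or $s$, and every prime above $2$ or $s$ occurs in $(c)$ to exponent $1$.
\end{itemize}

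So the classical input you actually need is that $\eps_{pq}\not\equiv 2$ modulo $\Q(\sqrt{pq})^{\times2}$, not the disjunction you wrote.
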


Notice that Proposition~\ref{thm:erratum}(c) is an actual index-$1$ statement about $E_{L^{+}}$,
not merely a statement in $E_{L^{+}}/E_{L^{+}}^2$.

For the remainder of the paper, the only imported arithmetic fact from
\cite{ElHamam2025Erratum} that we use is the dichotomy in Proposition~\ref{thm:erratum}(a):
exactly one of $\Theta$ and $\eps_{pq}\Theta$ is a square in $L^{+}$.
The local-certification arguments below do not use any further part of the erratum.

\begin{remark}\label{rem:theta-xi}
The normalization \eqref{eq:Theta-normalized} removes the sign indeterminacy inherent in the symbols
$\sqrt{\eps_{pq}\eps_{2pq}}$ and $\sqrt{\eps_{ps}\eps_{2ps}}$.
Indeed, changing the sign of exactly one factor replaces $\Theta$ by $-\Theta$, and
\[
\iota_0(-\Theta)<0,\qquad \iota_0(-\eps_{pq}\Theta)<0
\]
because $\iota_0(\eps_{pq})>0$.
Since every square in the totally real field $L^{+}$ is positive at every real embedding, neither
$-\Theta$ nor $-\eps_{pq}\Theta$ can be a square in $L^{+}$.
Hence the square candidate singled out by \cite{ElHamam2025Erratum} necessarily agrees with the normalized $\Theta$ above.

With $\Theta$ as in Proposition~\ref{thm:erratum}, we have
\[
\Theta^2=\eps_{pq}\eps_{2pq}\eps_{ps}\eps_{2ps}.
\]
It follows from Proposition~\ref{thm:erratum}(a) that exactly one of the two equations
\[
\xi^2=\Theta,\qquad \xi^2=\eps_{pq}\Theta
\]
admits a solution $\xi\in L^{+}$.
Equivalently, there exists a unique $\mu\in\{1,\eps_{pq}\}$ such that $\mu\Theta\in L^{+\times2},$ then the final generator can be taken as any $\xi\in L^{+\times}$ with $\xi^2=\mu\Theta.$ Thus the indeterminacy is not the sign of a square root, but the choice between the two affine candidates $[\Theta]$ and $[\eps_{pq}\Theta]$ in $L^{+\times}/L^{+\times2}$.
\end{remark}

\subsection{Norm constraints and the origin of the indeterminacy}

We briefly indicate how $\mu$ arises from the ``norm table'' method, since the same mechanism will be used later.

Let $\Gal(L^{+}/\Q)\simeq (\Z/2\Z)^3$ and choose generators:
$\tau_1$ changes $\sqrt2\mapsto -\sqrt2$,
$\tau_2$ changes $\sqrt{pq}\mapsto -\sqrt{pq}$,
$\tau_3$ changes $\sqrt{ps}\mapsto -\sqrt{ps}$.
For a unit $u\in E_{L^{+}}$, and for each nontrivial $\tau\in\Gal(L^{+}/\Q)$, the element
$u^{1+\tau}=u\cdot \tau(u)$ is a norm to the fixed field of $\tau$.

The key step in \cite{ElHamam2025Erratum} is to compute these norms for a small set of candidate units,
then solve the resulting system of exponent equations over $\F_2$.
The corrected computation separates two square-class patterns that were previously identified, leading to
a one-dimensional indeterminacy in the final exponent system. This is exactly the parameter $\mu$.

For the purposes of the present paper, the important point is that the indeterminacy is intrinsically a square test
in the degree-$8$ field: it is decided by which of the two elements $\Theta$ and $\eps_{pq}\Theta$ is a square in $L^{+}$,
equivalently by the unique $\mu\in\{1,\eps_{pq}\}$ such that $\mu\Theta\in L^{+\times2}$.

\subsection{A Hilbert-symbol criterion for the remaining square test}\label{sec:hilbert}

We now formulate a local criterion that decides whether $\Theta$ is a square in $L^{+}$,
and therefore eliminates the parameter $\mu$ of Proposition~\ref{thm:erratum}.

\medskip
\noindent\textbf{Square testing via Hilbert symbols.}

Let $F$ be a local field of characteristic $0$ with place $v$.
The quadratic Hilbert symbol induces a perfect pairing on the finite $\F_2$-vector space
$F^\times/F^{\times2}$ \cite[Ch.~XIV, \S2, Prop.~7]{SerreLocalFields}:
\[
F^\times/F^{\times2}\times F^\times/F^{\times2}\longrightarrow \{\pm 1\},\qquad
(\overline a,\overline b)\mapsto \Hilb{a}{b}_v.
\]
In particular, if $\mathcal B\subset F^\times$ maps to an $\F_2$-basis of $F^\times/F^{\times2}$, then
\eqref{eq:local-square-by-hilbert} gives a finite local criterion for squarehood.

We record explicit bases for later use.

\begin{lemma}\label{lem:local-basis}
Let $F$ be a finite extension of\/ $\Q_\ell$, with valuation ring $\OO_F$, maximal ideal $\mathfrak m_F$,
residue field $\kappa_F=\OO_F/\mathfrak m_F$, and a uniformizer $\pi$.
Then $F^\times/F^{\times2}$ is a finite $\F_2$-vector space, and:

\begin{enumerate}[label=(\alph*), leftmargin=2.4em]
\item If $\ell$ is odd, then $\dim_{\F_2}(F^\times/F^{\times2})=2$.
Moreover, if $u\in\OO_F^\times$ has nonsquare image in $\kappa_F^\times$, then
$\{\pi,u\}$ maps to an $\F_2$-basis of $F^\times/F^{\times2}$.

\item If $\ell=2$, then $F^\times/F^{\times2}$ has the form
\[
F^\times/F^{\times2}\ \simeq\ \langle \overline{\pi}\rangle \oplus \big(\OO_F^\times/(\OO_F^\times)^2\big),
\]
so in particular it is finite.
Choose units $u_1,\dots,u_t\in\OO_F^\times$ whose classes form an $\F_2$-basis of
$\OO_F^\times/(\OO_F^\times)^2$. Then $\{\pi,u_1,\dots,u_t\}$ maps to an $\F_2$-basis of
$F^\times/F^{\times2}$.

In particular, for $F=\Q_2$ one may take $\{2,-1,5\}$ as a basis of $\Q_2^\times/\Q_2^{\times2}$.
\end{enumerate}
\end{lemma}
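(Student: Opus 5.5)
The plan is to use the structure of $F^\times$ as a local field: $F^\times \simeq \pi^{\Z}\times \OO_F^\times$, via $x=\pi^{v(x)}u$. Passing to squares, $F^{\times2}=\pi^{2\Z}\times(\OO_F^\times)^2$, because $v(x^2)=2v(x)$ and the unit part of $x^2$ is the square of the unit part of $x$. This gives the decomposition
\[
F^\times/F^{\times2}\simeq \Z/2\Z\oplus \OO_F^\times/(\OO_F^\times)^2,
\]
with the first factor generated by $\overline\pi$. That is exactly the displayed form in (b), and it is valid for any $\ell$. So everything reduces to computing $\OO_F^\times/(\OO_F^\times)^2$. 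Once we have a basis $\{u_i\}$ of that group, $\{\pi,u_1,\dots,u_t\}$ is a basis of the full quotient.

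\textbf{Odd $\ell$.} Write $\OO_F^\times\simeq \kappa_F^\times\times U^{(1)}$ using Teichmüller lifts, where $U^{(1)}=1+\mathfrak m_F$.
\begin{itemize}[leftmargin=2.4em]
\item Every element of $U^{(1)}$ is a square. The cleanest route is Hensel's lemma applied to $X^2-a$ with $a\equiv1$: the root $\bar 1$ is simple since $2\in\OO_F^\times$. Alternatively, use the binomial series for $\sqrt{1+x}$, whose coefficients are $\ell$-integral.
\item Consequently $\OO_F^\times/(\OO_F^\times)^2\simeq \kappa_F^\times/\kappa_F^{\times2}$. Since $\kappa_F^\times$ is cyclic of even order $|\kappa_F|-1$, this quotient has order $2$.
\end{itemize}
Any unit $u$ whose residue is a nonsquare therefore generates $\OO_F^\times/(\OO_F^\times)^2$, which gives the basis $\{\pi,u\}$ and dimension $2$.

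\textbf{$\ell=2$, finiteness.} Here I need $\OO_F^\times/(\OO_F^\times)^2$ to be finite.
\begin{itemize}[leftmargin=2.4em]
\item For $n$ large ($n>2e$, with $e=v_F(2)$), every element of $U^{(n)}=1+\mathfrak m_F^n$ is a square. One can see this from the convergent binomial series. Alternatively, apply Hensel's lemma in its strong form $|f(a)|<|f'(a)|^2$ to $f=X^2-a$ at $X=1$, where $|f'(1)|^2=|4|$.
\item So $(\OO_F^\times)^2\supseteq U^{(n)}$, and the quotient is a quotient of the finite group $\OO_F^\times/U^{(n)}$.
\end{itemize}
Since every element has order dividing $2$, the quotient is a finite $\F_2$-vector space, and any basis lifts to units $u_i$. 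Together these steps prove (a) and (b).

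\textbf{The case $F=\Q_2$.} The standard fact is that a unit of $\Z_2$ is a square iff it is $\equiv1\pmod 8$; this is Hensel in the strong form just mentioned. Hence $\Z_2^\times/\Z_2^{\times2}\simeq(\Z/8)^\times$, which has basis $\{-1,5\}$. Adding the uniformizer $2$ gives the basis $\{2,-1,5\}$.

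The only step needing real care is the squares-near-$1$ claim when $\ell=2$. My first attempt there would be the strong Hensel inequality as above; the remainder is bookkeeping.
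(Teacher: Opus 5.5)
Your proof is correct. The decomposition $F^\times/F^{\times2}\simeq\langle\overline\pi\rangle\oplus\OO_F^\times/(\OO_F^\times)^2$ and the odd-$\ell$ case follow the paper's argument. The paper uses the reduction sequence $1\to U^1\to\OO_F^\times\to\kappa_F^\times\to 1$ where you use Teichm\"uller lifts, but both reduce to Hensel's lemma giving $1+\mathfrak m_F\subset(\OO_F^\times)^2$.

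The genuine difference is the finiteness step for $\ell=2$.
\begin{itemize}
\item The paper uses the $2$-adic logarithm and exponential to identify $U^n$ with $\mathfrak m_F^n$ for large $n$, so that squaring becomes doubling. This gives $U^n/(U^n)^2\simeq\mathfrak m_F^n/2\mathfrak m_F^n$, which is finite, and a finite-index argument then passes to $U^1$ and $\OO_F^\times$. For $\Q_2$ the paper simply cites Serre.
\item You instead use the strong Hensel inequality $|f(1)|<|f'(1)|^2=|4|$ to show directly that $U^{(n)}\subseteq(\OO_F^\times)^2$ once $n>2e$. Hence $\OO_F^\times/(\OO_F^\times)^2$ is a quotient of the finite group $\OO_F^\times/U^{(2e+1)}\simeq(\OO_F/\mathfrak m_F^{2e+1})^\times$.
\end{itemize}

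What each route buys:
\begin{itemize}
\item Your argument is more elementary: it needs no convergence analysis of $\log$ and $\exp$, and it gives an explicit level and an explicit bound on the size.
\item The same inequality with $e=1$ proves the $\Q_2$ claim rather than citing it: $\Z_2^{\times2}=1+8\Z_2$, so $\Z_2^\times/\Z_2^{\times2}\simeq(\Z/8\Z)^\times=\langle -1,5\rangle$.
\item The logarithm route carries more structure. It identifies $U^n/(U^n)^2$ with $\F_2^{[F:\Q_2]}$, which is the natural path to the exact formula $\dim_{\F_2}F^\times/F^{\times2}=[F:\Q_2]+2$.
\end{itemize}
For the bare finiteness the lemma asserts, your argument is shorter and self-contained.
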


\begin{proof}
Let $v$ be the normalized valuation on $F$, $\OO_F$ its valuation ring, $\mathfrak m_F$ its maximal ideal,
$\kappa_F=\OO_F/\mathfrak m_F$ the residue field, and fix a uniformizer $\pi$.
Every $x\in F^\times$ can be written uniquely as $x=\pi^{v(x)}u$ with $u\in\OO_F^\times$.
Hence
\[
F^\times \;\simeq\; \langle \pi\rangle \times \OO_F^\times
\quad\Longrightarrow\quad
F^\times/F^{\times2}\;\simeq\; \langle \overline{\pi}\rangle \oplus \big(\OO_F^\times/(\OO_F^\times)^2\big),
\]
where $\overline{\pi}$ denotes the class of $\pi$ modulo squares. In particular, to understand $F^\times/F^{\times2}$
it remains to understand $\OO_F^\times/(\OO_F^\times)^2$.

\smallskip
\noindent\textbf{Step 1: reduction to the residue field.}
Consider the reduction map
\[
\rho:\OO_F^\times \longrightarrow \kappa_F^\times,\qquad u\mapsto \overline{u}.
\]
It is surjective, and its kernel is the group of principal units
\[
U^1:=1+\mathfrak m_F.
\]
Thus we have a short exact sequence
\[
1\longrightarrow U^1 \longrightarrow \OO_F^\times \overset{\rho}{\longrightarrow} \kappa_F^\times \longrightarrow 1.
\]
Passing to quotients by squares yields an exact sequence of $\F_2$-vector spaces
\begin{equation}\label{eq:units_exact}
U^1/(U^1)^2\ \longrightarrow\ \OO_F^\times/(\OO_F^\times)^2\ \longrightarrow\ 
\kappa_F^\times/(\kappa_F^\times)^2\ \longrightarrow\ 1.
\end{equation}
We now treat the cases $\ell\neq 2$ and $\ell=2$ separately.

\smallskip
\noindent\textbf{Case (a): $\ell$ odd.}
We claim that every element of $U^1=1+\mathfrak m_F$ is a square in $F$.
Let $u\in U^1$ and consider $f(X)=X^2-u\in\OO_F[X]$.
Take $x_0=1$. Then
\[
f(1)=1-u\in\mathfrak m_F,
\qquad
f'(X)=2X\ \Rightarrow\ f'(1)=2\in \OO_F^\times
\]
because $\ell$ is odd, so $2$ is a unit in $\OO_F$.
By Hensel's lemma, $f$ has a root $x\in\OO_F$ with $x\equiv 1\pmod{\mathfrak m_F}$, i.e.\ $x^2=u$.
Therefore
\[
U^1\subset (\OO_F^\times)^2
\quad\Longrightarrow\quad
U^1/(U^1)^2=\{1\}.
\]
From \eqref{eq:units_exact} we obtain an isomorphism
\[
\OO_F^\times/(\OO_F^\times)^2 \;\simeq\; \kappa_F^\times/(\kappa_F^\times)^2.
\]
Since $\kappa_F^\times$ is cyclic of order $|\kappa_F|-1$ (which is even when $\ell$ is odd),
the quotient $\kappa_F^\times/(\kappa_F^\times)^2$ has order $2$.
Choose any unit $u\in\OO_F^\times$ whose residue class $\overline{u}\in\kappa_F^\times$ is a nonsquare;
then the class of $u$ generates $\OO_F^\times/(\OO_F^\times)^2$.
Hence $\OO_F^\times/(\OO_F^\times)^2$ has $\F_2$-dimension $1$, and together with the factor
$\langle\overline{\pi}\rangle\simeq \Z/2\Z$ we get
\[
\dim_{\F_2}\big(F^\times/F^{\times2}\big)=2,
\]
and $\{\pi,u\}$ maps to an $\F_2$-basis of $F^\times/F^{\times2}$.

\smallskip
\noindent\textbf{Case (b): $\ell=2$.}
We keep the decomposition
\[
F^\times/F^{\times2}\;\simeq\; \langle \overline{\pi}\rangle \oplus \big(\OO_F^\times/(\OO_F^\times)^2\big).
\]
It remains to explain why $\OO_F^\times/(\OO_F^\times)^2$ is finite.
Let $U^n:=1+\mathfrak m_F^n$ for $n\ge 1$.
For $n$ sufficiently large, the $2$-adic logarithm and exponential define inverse group isomorphisms
\[
\log:U^n \xrightarrow{\ \sim\ } \mathfrak m_F^n,
\qquad
\exp:\mathfrak m_F^n \xrightarrow{\ \sim\ } U^n
\]
(see Serre \cite[Ch.~XIV, \S4]{SerreLocalFields}).
Under these isomorphisms, squaring on $U^n$ corresponds to doubling on $\mathfrak m_F^n$:
\[
\log\big((1+x)^2\big)=2\log(1+x),\qquad (x\in\mathfrak m_F^n).
\]
Hence
\[
U^n/(U^n)^2\ \simeq\ \mathfrak m_F^n/2\mathfrak m_F^n.
\]
Now $\mathfrak m_F^n$ is a finitely generated $\Z_2$-module of rank $[F:\Q_2]$, so
$\mathfrak m_F^n/2\mathfrak m_F^n$ is a finite $\F_2$-vector space. Therefore $U^n/(U^n)^2$ is finite.
Since $U^n$ has finite index in $U^1$ (both are open subgroups of the compact group $\OO_F^\times$),
it follows that $U^1/(U^1)^2$ is finite as well, and thus $\OO_F^\times/(\OO_F^\times)^2$ is finite.
Choosing units $u_1,\dots,u_t\in\OO_F^\times$ whose classes form an $\F_2$-basis of
$\OO_F^\times/(\OO_F^\times)^2$, the set $\{\pi,u_1,\dots,u_t\}$ maps to an $\F_2$-basis of
$F^\times/F^{\times2}$ by the direct-sum decomposition at the start.

\smallskip
\noindent\textbf{The case $F=\Q_2$.}
It is classical that $\Q_2^\times/\Q_2^{\times2}$ has order $8$ and is generated by the classes of
$2$, $-1$ and $5$ (equivalently, every element of $\Q_2^\times$ is a square times one of
$\{1,\,2,\,-1,\,-2,\,5,\,10,\,-5,\,-10\}$); see Serre \cite[Ch.~XIV, \S4, p.~212]{SerreLocalFields}.
Hence $\{2,-1,5\}$ is an $\F_2$-basis of $\Q_2^\times/\Q_2^{\times2}$.
\end{proof}

\subsubsection{A finite set of local certificates}

In Theorem~\ref{thm:hilbert-criterion} the parameter $\mu$ is decided using a single finite place $w$.
To standardize the verification process, it is convenient to fix a finite set $S$ of places
(containing the real places and the primes above $2pqs$) which serves as a certificate set for squarehood
of the finitely many explicit candidates produced by the norm-table construction (including $\Theta$).

Let $K=L^{+}$.
Let $S$ be any finite set of places of $K$ containing the real places and all places above $2pqs$.
For the one-bit problem studied here, we allow ourselves to enlarge $S$ by finitely many auxiliary split places whenever convenient; this is consistent with the later finite-choice separation theorem, Theorem~\ref{thm:finite-test-set}.
In particular, the split primes used in Section~\ref{sec:examples} need not divide $2pqs$.
No minimality statement for $S$ is intended here.

For each finite place $w\in S$ choose:
\begin{itemize}[leftmargin=2em]
\item a uniformizer $\pi_w$ of $K_w$;
\item if $w\nmid 2$, a unit $u_w\in \OO_{K_w}^\times$ whose residue class is a nonsquare in $\kappa_{K_w}^\times$;
\item if $w\mid 2$, units $u_{w,1},\dots,u_{w,t_w}\in \OO_{K_w}^\times$ whose classes form an $\F_2$-basis of
$\OO_{K_w}^\times/(\OO_{K_w}^\times)^2$ (where $t_w=\dim_{\F_2}\OO_{K_w}^\times/(\OO_{K_w}^\times)^2$).
\end{itemize}
By Lemma~\ref{lem:local-basis}, the sets
\[
\mathcal B_w=
\begin{cases}
\{\pi_w,u_w\}, & w\nmid 2,\\
\{\pi_w,u_{w,1},\dots,u_{w,t_w}\}, & w\mid 2,
\end{cases}
\]
map to $\F_2$-bases of $K_w^\times/K_w^{\times2}$.

\begin{definition}\label{def:testset}
Let $K$ be a number field and let $w$ be a finite place of $K$.
Choose a finite subset $\mathcal B_w\subset K_w^\times$ whose image is an $\F_2$-basis of
$K_w^\times/K_w^{\times2}$.
For $u\in K^\times$, define the local Hilbert test vector at $w$ by
\[
\mathbf h_w(u):=\big(\Hilb{u}{b}_w\big)_{b\in\mathcal B_w}\in\{\pm1\}^{|\mathcal B_w|}.
\]
\end{definition}

\begin{theorem}\label{thm:hilbert-criterion}

Let
\[
K=L^{+}=\Q(\sqrt2,\sqrt{pq},\sqrt{ps}),
\]
and let $\Theta\in K^\times$ be the normalized element defined in \eqref{eq:Theta-normalized}.
Assume the dichotomy from Proposition~\ref{thm:erratum}(a): exactly one of the two elements
$\Theta$ and $\eps_{pq}\Theta$ is a square in $K$.
Let $w$ be a finite place of $K$ such that $\eps_{pq}\notin K_w^{\times2}$.
Choose a set $\mathcal B_w\subset K_w^\times$ mapping to an $\F_2$-basis of
$K_w^\times/K_w^{\times2}$ (as in Lemma~\ref{lem:local-basis}).

Then the following are equivalent:
\begin{enumerate}[label=(\roman*), leftmargin=2.4em]
\item $\Theta\in K^{\times2}$.
\item $\Theta\in K_w^{\times2}$.
\item $\Hilb{\Theta}{b}_w=1$ for all $b\in\mathcal B_w$.
\end{enumerate}
Consequently, the unique $\mu\in\{1,\eps_{pq}\}$ such that $\mu\Theta\in K^{\times2}$ is given by
\[
\mu=
\begin{cases}
1,& \text{if $\Hilb{\Theta}{b}_w=1$ for all $b\in\mathcal B_w$,}\\
\eps_{pq},& \text{otherwise.}
\end{cases}
\]
In particular, the final generator in Proposition~\ref{thm:erratum} can be taken as
\[
\xi=\sqrt{\mu\Theta}\in K^\times.
\]
\end{theorem}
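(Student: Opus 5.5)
The plan is to prove the cycle (i)$\Rightarrow$(ii)$\Rightarrow$(i) and then (ii)$\Leftrightarrow$(iii). Only the dichotomy of Proposition~\ref{thm:erratum}(a) and local facts are used, so no global reciprocity is needed.

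\emph{(i)$\Rightarrow$(ii).} This is immediate: $K\hookrightarrow K_w$, so $K^{\times2}\subset K_w^{\times2}$.

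\emph{(ii)$\Rightarrow$(i).} This is where the hypothesis on $w$ is used. Suppose $\Theta\in K_w^{\times2}$ but $\Theta\notin K^{\times2}$. By the dichotomy, $\eps_{pq}\Theta\in K^{\times2}$, hence $\eps_{pq}\Theta\in K_w^{\times2}$. Dividing the two local statements in the group $K_w^\times/K_w^{\times2}$ gives $\eps_{pq}\in K_w^{\times2}$, which contradicts the choice of $w$. So $\Theta\in K^{\times2}$.

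\emph{(ii)$\Leftrightarrow$(iii).} This is exactly \eqref{eq:local-square-by-hilbert} applied to $F=K_w$ with the basis $\mathcal B_w$ supplied by Lemma~\ref{lem:local-basis}. The symbol $\Hilb{\Theta}{\cdot}_w$ is a character of $K_w^\times/K_w^{\times2}$, and it is trivial precisely when it vanishes on a basis. By nondegeneracy of the Hilbert pairing, it is trivial precisely when $\Theta$ is a local square.

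\emph{The formula for $\mu$.} If all the symbols $\Hilb{\Theta}{b}_w$ equal $1$, then $\Theta\in K^{\times2}$ and uniqueness in Proposition~\ref{thm:erratum}(b) gives $\mu=1$. Otherwise $\Theta\notin K^{\times2}$, so the dichotomy forces $\eps_{pq}\Theta\in K^{\times2}$ and $\mu=\eps_{pq}$. The assertion about $\xi$ then follows from Remark~\ref{rem:theta-xi}.

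There is no genuine obstacle. The only point needing care is the (ii)$\Rightarrow$(i) step, where one must note that local squares form a subgroup. This is what makes the quotient argument valid and is the precise reason the condition $\eps_{pq}\notin K_w^{\times2}$ is required.
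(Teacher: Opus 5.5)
Your proposal is correct and follows essentially the same route as the paper. You get (ii)$\Leftrightarrow$(iii) from nondegeneracy of the local Hilbert pairing tested on the basis $\mathcal B_w$, and (ii)$\Rightarrow$(i) by combining the global dichotomy with $\eps_{pq}\notin K_w^{\times2}$; the paper first shows that exactly one of $\Theta,\eps_{pq}\Theta$ is a square in $K_w$, which is your contradiction argument rearranged, and its Step~3 determines $\mu$ and $\xi$ exactly as you do.
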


\begin{proof}
\noindent\textbf{Step 1: (ii)$\Longleftrightarrow$(iii) via nondegeneracy of the Hilbert pairing.}
Let $F:=K_w$ be the completion at $w$.
The quadratic Hilbert symbol
\[
(\cdot,\cdot)_w:\ F^\times/F^{\times2}\times F^\times/F^{\times2}\longrightarrow \{\pm 1\}
\]
is a nondegenerate (indeed, perfect) bilinear pairing of finite $\F_2$-vector spaces
\cite[Ch.~XIV, \S2, Prop.~7]{SerreLocalFields}.
Let $\overline{\Theta}\in F^\times/F^{\times2}$ be the squareclass of $\Theta$.

Assume first that $\Theta\in F^{\times2}$. Then $\overline{\Theta}=1$ in $F^\times/F^{\times2}$, and by bilinearity
we have $(\Theta,b)_w=(1,b)_w=1$ for all $b\in F^\times$, hence in particular for all $b\in\mathcal B_w$.
Thus (ii)$\Rightarrow$(iii).

Conversely, assume $(\Theta,b)_w=1$ for all $b\in\mathcal B_w$.
Since $\mathcal B_w$ maps to an $\F_2$-basis of $F^\times/F^{\times2}$, every class
$\overline{y}\in F^\times/F^{\times2}$ can be written as a product of basis elements. By bilinearity,
$(\Theta,y)_w=1$ for every $y\in F^\times$. In other words, $\overline{\Theta}$ lies in the left-kernel of the pairing.
By nondegeneracy, the left-kernel is trivial, so $\overline{\Theta}=1$, i.e.\ $\Theta\in F^{\times2}$.
Therefore (iii)$\Rightarrow$(ii), proving (ii)$\Leftrightarrow$(iii).

\smallskip
\noindent\textbf{Step 2: (i)$\Longleftrightarrow$(ii) using the choice of $w$ and the global dichotomy.}
Assume that exactly one of $\Theta$ and $\eps_{pq}\Theta$ is a square in $K$.
Let $\mu\in\{1,\eps_{pq}\}$ be the unique element such that $\mu\Theta\in K^{\times2}$.
Then $\mu\Theta$ is a square in every completion of $K$, in particular in $F=K_w$.
Hence at least one of $\Theta$ and $\eps_{pq}\Theta$ is a square in $F$.

On the other hand, they cannot both be squares in $F$: if $\Theta\in F^{\times2}$ and
$\eps_{pq}\Theta\in F^{\times2}$, then
\[
\eps_{pq}=\frac{\eps_{pq}\Theta}{\Theta}\in F^{\times2},
\]
contradicting the hypothesis $\eps_{pq}\notin F^{\times2}$.
Therefore exactly one of $\Theta$ and $\eps_{pq}\Theta$ is a square in $F$.

Now (i)$\Rightarrow$(ii) is immediate: a global square is a square in every completion.
Conversely, assume $\Theta\in F^{\times2}$. Then $\eps_{pq}\Theta\notin F^{\times2}$ by the previous paragraph.
If $\eps_{pq}\Theta$ were a global square in $K$, it would be a square in $F$, contradiction.
Thus $\eps_{pq}\Theta\notin K^{\times2}$, and since exactly one of $\Theta$ and $\eps_{pq}\Theta$
is a global square, it follows that $\Theta\in K^{\times2}$.
This proves (ii)$\Rightarrow$(i), hence (i)$\Leftrightarrow$(ii).

\smallskip
\noindent\textbf{Step 3: determination of $\mu$ and construction of the final generator.}
Define $\mu\in\{1,\eps_{pq}\}$ by the displayed rule in the statement: $\mu=1$ if and only if
$\Theta\in F^{\times2}$ (equivalently, if and only if $(\Theta,b)_w=1$ for all $b\in\mathcal B_w$).
Then $\mu\Theta\in K^{\times2}$ by (i)$\Leftrightarrow$(ii), and we may choose $\xi\in K^\times$ with
$\xi^2=\mu\Theta$. This $\xi=\sqrt{\mu\Theta}$ is precisely the final generator appearing in
Proposition~\ref{thm:erratum}.
\end{proof}

\begin{corollary}[Single-functional affine certification of the residual bit]\label{cor:single-functional}
Under the hypotheses of Theorem~\ref{thm:hilbert-criterion}, choose
$b_w\in K_w^\times/K_w^{\times2}$ such that
\[
\Hilb{\eps_{pq}}{b_w}_w=-1.
\]
Then
\[
\mu=
\begin{cases}
1,& \text{if }\Hilb{\Theta}{b_w}_w=1,\\
\eps_{pq},& \text{if }\Hilb{\Theta}{b_w}_w=-1.
\end{cases}
\]
Equivalently, the affine candidate set
\[
[\Theta]+\langle[\eps_{pq}]\rangle
=
\{[\Theta],[\eps_{pq}\Theta]\}
\subset K^\times/K^{\times2}
\]
is certified by a single local Hilbert-symbol bit.
\end{corollary}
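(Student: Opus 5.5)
The plan is to reduce everything to the local dichotomy already proved in Step~2 of the proof of Theorem~\ref{thm:hilbert-criterion}. First I will note that $b_w$ exists. The pairing $\Hilb{\cdot}{\cdot}_w$ on $K_w^\times/K_w^{\times2}$ is nondegenerate, and $\eps_{pq}\notin K_w^{\times2}$ by hypothesis. Hence the linear functional $y\mapsto\Hilb{\eps_{pq}}{y}_w$ is nonzero, so some $b_w$ satisfies $\Hilb{\eps_{pq}}{b_w}_w=-1$.

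The core step is to use bilinearity to evaluate the functional $\lambda:=\Hilb{\cdot}{b_w}_w$ on the two candidates:
\[
\Hilb{\eps_{pq}\Theta}{b_w}_w=\Hilb{\eps_{pq}}{b_w}_w\,\Hilb{\Theta}{b_w}_w=-\Hilb{\Theta}{b_w}_w.
\]
So $\lambda$ takes opposite values on $[\Theta]$ and $[\eps_{pq}\Theta]$. By Proposition~\ref{thm:erratum}(b), $\mu\Theta$ is a global square. It is therefore a square in $K_w$, and so $\lambda(\mu\Theta)=1$. There are two cases.
\begin{itemize}
\item If $\Hilb{\Theta}{b_w}_w=-1$, then $\Theta$ is not a local square, hence not a global square, and so $\mu=\eps_{pq}$.
\item If $\Hilb{\Theta}{b_w}_w=1$, then $\lambda(\eps_{pq}\Theta)=-1$. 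Thus $\eps_{pq}\Theta$ is not a square in $K_w$, hence not a global square, and uniqueness of $\mu$ forces $\mu=1$.
\end{itemize}
This case analysis requires no appeal to the full basis $\mathcal B_w$. Only the single functional $\lambda$ is used.

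For the affine reformulation, I will observe that $[\Theta]+\langle[\eps_{pq}]\rangle=\{[\Theta],[\eps_{pq}\Theta]\}$. The map $\lambda$, viewed as an $\F_2$-valued function, restricts to a bijection from this two-element coset onto $\{\pm1\}$. The unique global square in the coset is then the element on which $\lambda=1$, so one local bit certifies it.

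There is no real obstacle here. The only point needing care is existence of $b_w$, which is where nondegeneracy (Serre, Ch.~XIV) and the hypothesis $\eps_{pq}\notin K_w^{\times2}$ enter.
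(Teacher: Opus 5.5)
Your proof is correct, and it is a slightly more economical variant of the paper's argument. Both proofs begin the same way: $b_w$ exists by perfectness of the local pairing together with $\eps_{pq}\notin K_w^{\times2}$, and bilinearity shows that $\Hilb{\cdot}{b_w}_w$ separates $[\Theta]$ from $[\eps_{pq}\Theta]$.

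The paper then invokes the full local dichotomy from Step~2 of Theorem~\ref{thm:hilbert-criterion}: exactly one of $\Theta$, $\eps_{pq}\Theta$ is a square in $K_w$. From this it deduces $[\Theta]_w\in\{0,[\eps_{pq}]_w\}$, so that $\Hilb{\Theta}{b_w}_w=1$ if and only if $\Theta\in K_w^{\times2}$. Finally it lifts this to $K$ through the equivalence (i)$\Leftrightarrow$(ii) of that theorem.

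You never characterize local squarehood. You only use that the global square $\mu\Theta$ has trivial symbol, so $\mu$ is read off directly from the sign. Contrary to your opening sentence, you therefore use only the trivial half of Step~2, namely that global squares are local squares. Also, in your second case it is the existence of $\mu$, not its uniqueness, that forces $\mu=1$.

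The paper's route gives the stronger intermediate fact that the single bit is an exact local squarehood test for $\Theta$, which ties the corollary to the single-place criterion of Theorem~\ref{thm:hilbert-criterion}. Your route is shorter and does not need the (ii)$\Rightarrow$(i) direction of that theorem.
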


\begin{proof}
Since $[\eps_{pq}]_w\neq 0$ in $K_w^\times/K_w^{\times2}$ and the local Hilbert pairing is perfect,
there exists $b_w$ with $\Hilb{\eps_{pq}}{b_w}_w=-1$.
By Step~2 of the proof of Theorem~\ref{thm:hilbert-criterion}, exactly one of $\Theta$ and
$\eps_{pq}\Theta$ is a square in $K_w$.
Hence in the local squareclass space the affine set
\[
[\Theta]_w+\langle[\eps_{pq}]_w\rangle
\]
consists of the two classes $0$ and $[\eps_{pq}]_w$.
The functional
\[
\lambda_w(u)=
\begin{cases}
0,& \text{if }\Hilb{u}{b_w}_w=1,\\
1,& \text{if }\Hilb{u}{b_w}_w=-1
\end{cases}
\]
satisfies $\lambda_w([\eps_{pq}]_w)=1$, so it distinguishes these two local classes.
Therefore $\lambda_w([\Theta])=0$ if and only if $[\Theta]_w=0$, equivalently if and only if
$\Theta\in K_w^{\times2}$.
By Theorem~\ref{thm:hilbert-criterion}, this is equivalent to $\Theta\in K^{\times2}$,
that is, to $\mu=1$.
The complementary case gives $\mu=\eps_{pq}$.
\end{proof}

\begin{remark}\label{rem:find-w}
Assume $\eps_{pq}\notin K^{\times2}$. Then $M:=K(\sqrt{\eps_{pq}})/K$ is a nontrivial quadratic extension.
By the Chebotarev density theorem applied to $M/K$ \cite[Ch.~VII, \S13]{NeukirchANT},
there exists a finite place $w$ of $K$ which does not split completely in $M$.
Equivalently, $\eps_{pq}\notin K_w^{\times2}.$
Thus a finite place detecting the squareclass of $\eps_{pq}$ always exists.
\end{remark}

\section{From Hilbert symbols to a split-prime residue criterion}\label{sec:residue}

We now sharpen Theorem~\ref{thm:hilbert-criterion} by replacing an arbitrary local Hilbert-symbol basis with a single Legendre-symbol computation at a split auxiliary prime.

Theorem~\ref{thm:hilbert-criterion} reduces the remaining global indeterminacy to finitely many local computations.
To make this criterion more explicit, one would like to work at a place $w\mid t$ above a rational prime
$t\nmid 2pqs$ that splits completely in $K$, so that $K_w\simeq \Q_t$ and the square test reduces to a residue-square test in $\F_t^\times$.

For the purposes of the present paper, no abstract existence theorem for such split auxiliary primes is needed.
The split-prime criterion below is formulated conditionally on the choice of $t$ and $w$, and in the numerical examples of Section~\ref{sec:examples} we choose suitable split primes explicitly.

This reformulation makes the local test especially convenient in practice.
It also shows that the residual bit is effectively computable from richer local information, while Section~\ref{sec:localdata} will show that it is not determined by the coarse datum
\[
\mathcal D(p,q,s)=\bigl(p\bmod 8,\ q\bmod 8,\ s\bmod 8,\ \leg{q}{p},\ \leg{s}{p},\ \leg{q}{s}\bigr)
\]
alone.

\begin{theorem}\label{thm:split-prime}
Let $K=L^{+}$ and let $\Theta$ be as in Theorem~\ref{thm:hilbert-criterion}.
Assume that exactly one of $\Theta$ and $\eps_{pq}\Theta$ is a square in $K$.
Let $t\nmid 2pqs$ be an odd rational prime which splits completely in $K$, and let $w\mid t$ be a place of $K$
such that $\eps_{pq}\notin K_w^{\times2}$.

Then $K_w\simeq \Q_t$ and $\Theta\in \OO_{K_w}^\times$.
Let
\[
\widetilde{\Theta}_{w}\in \F_t^\times
\]
denote the residue class of $\Theta$ at $w$.
Then
\[
\Theta\in K^{\times2}
\iff
\Theta\in K_w^{\times2}
\iff
\widetilde{\Theta}_{w}\in \F_t^{\times2}
\iff
\leg{\widetilde{\Theta}_{w}}{t}=1.
\]
Consequently,
\[
\mu=
\begin{cases}
1,& \text{if }\leg{\widetilde{\Theta}_{w}}{t}=1,\\[1ex]
\eps_{pq},& \text{if }\leg{\widetilde{\Theta}_{w}}{t}=-1.
\end{cases}
\]
\end{theorem}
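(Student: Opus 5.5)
The plan is to reduce everything to Theorem~\ref{thm:hilbert-criterion} and Lemma~\ref{lem:local-basis}(a), after two preliminary local facts about the completion $K_w$.

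\textbf{Step 1: $K_w\simeq\Q_t$.} Since $t$ splits completely in $K$, there are $[K:\Q]=8$ places above $t$. Each has ramification index and residue degree $1$. Hence $[K_w:\Q_t]=e_wf_w=1$ and $K_w=\Q_t$, with residue field $\F_t$ and uniformizer $t$.

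\textbf{Step 2: $\Theta$ is a $w$-adic unit.} By \eqref{eq:Theta-normalized} we have $\Theta^2=\eps_{pq}\eps_{2pq}\eps_{ps}\eps_{2ps}$, a product of units of $K$, so $\Theta^2\in\OO_K^\times$. Since $\Theta$ is integral over $\OO_K$ and $\OO_K$ is integrally closed, $\Theta\in\OO_K$. Its inverse $\Theta^{-1}$ squares to a unit as well, so $\Theta^{-1}\in\OO_K$ too, and $\Theta\in E_K$. (Alternatively, $2w(\Theta)=w(\Theta^2)=0$ directly.) Thus $\Theta\in\OO_{K_w}^\times=\Z_t^\times$, and its residue $\widetilde{\Theta}_w\in\F_t^\times$ is well defined.

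\textbf{Step 3: the chain of equivalences.}
\begin{itemize}
\item The first equivalence, $\Theta\in K^{\times2}\iff\Theta\in K_w^{\times2}$, is exactly (i)$\Leftrightarrow$(ii) of Theorem~\ref{thm:hilbert-criterion}. Its hypotheses hold: we have the global dichotomy, and $w$ is a finite place with $\eps_{pq}\notin K_w^{\times2}$.
\item For the second equivalence, $t$ is odd, and the proof of Lemma~\ref{lem:local-basis}(a) shows that $U^1=1+t\Z_t$ consists of squares (Hensel's lemma). This gives $\OO_{K_w}^\times/(\OO_{K_w}^\times)^2\simeq\F_t^\times/\F_t^{\times2}$ via reduction.
\begin{itemize}
\item If $\widetilde{\Theta}_w=\bar c^{\,2}$, lift $c$ to $\Z_t^\times$. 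Then $\Theta c^{-2}\in U^1$ is a square, so $\Theta\in K_w^{\times2}$.
\item Conversely, if $\Theta=x^2$ with $x\in\Q_t^\times$, then $x$ is a unit because $w(\Theta)=0$. Reducing gives $\widetilde{\Theta}_w=\bar x^{\,2}$.
\end{itemize}
\item The last equivalence is the definition of the Legendre symbol on $\F_t^\times$.
\end{itemize}

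\textbf{Step 4: the formula for $\mu$.} This follows as in Step~3 of the proof of Theorem~\ref{thm:hilbert-criterion}. If the symbol equals $1$, then $\Theta\in K^{\times2}$ and $\mu=1$. Otherwise $\Theta\notin K^{\times2}$, so by the dichotomy $\eps_{pq}\Theta\in K^{\times2}$ and $\mu=\eps_{pq}$.

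The only step needing any care is Step~2, the integrality of $\Theta$; once that is settled, the remaining steps are routine.
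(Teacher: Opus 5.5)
Your proposal is correct and follows the paper's proof essentially step for step: $K_w\simeq\Q_t$ from complete splitting, $v_w(\Theta)=0$ from $\Theta^2\in E_K$, the first equivalence from Theorem~\ref{thm:hilbert-criterion}, and Hensel's lemma for the unit-square criterion at odd $t$. The differences are cosmetic: you use the fact $1+t\Z_t\subset\Z_t^{\times2}$ from the proof of Lemma~\ref{lem:local-basis}(a) instead of applying Hensel directly to $X^2-\Theta$, and you state explicitly that a square root of a $w$-adic unit is itself a unit.
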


\begin{proof}

Because $t$ splits completely in the Galois extension $K/\Q$, each completion $K_w$ above $t$ is isomorphic to $\Q_t$.
Moreover,
\[
\Theta^2=\eps_{pq}\eps_{2pq}\eps_{ps}\eps_{2ps}\in E_K.
\]
Hence $v_w(\Theta^2)=0$, so $2v_w(\Theta)=0$ and therefore $v_w(\Theta)=0$.
Thus $\Theta\in \OO_{K_w}^\times$.

By Theorem~\ref{thm:hilbert-criterion},
\[
\Theta\in K^{\times2}\iff \Theta\in K_w^{\times2}.
\]
Thus it remains to characterize squarehood of a unit in $K_w\simeq \Q_t$.
For odd $t$, a unit $u\in \Q_t^\times$ is a square in $\Q_t$ if and only if its residue class
$\widetilde u\in \F_t^\times$ is a square.
Indeed, if $u=x^2$, then clearly $\widetilde u=\widetilde x^{\,2}$.
Conversely, if $\widetilde u$ is a square in $\F_t^\times$, choose $a\in \Z_t^\times$ with
$a^2\equiv u\pmod t$.
Then $f(X)=X^2-u$ satisfies
$f(a)\equiv 0\pmod t$ and $f'(a)=2a\in \Z_t^\times$ because $t$ is odd.
By Hensel's lemma, $f$ has a root in $\Q_t$, so $u\in \Q_t^{\times2}$.

Applying this with $u=\Theta$ yields
\[
\Theta\in K_w^{\times2}
\iff
\widetilde{\Theta}_{w}\in \F_t^{\times2}
\iff
\leg{\widetilde{\Theta}_{w}}{t}=1.
\]
The formula for $\mu$ follows at once.
\end{proof}

\begin{remark}
Theorem~\ref{thm:split-prime} is the precise theorem-level refinement of the heuristic ``from Hilbert symbols to residue data''.
In practice one chooses a split prime $t$, fixes a compatible system of square roots of $2$, $pq$, and $ps$ modulo $t$, evaluates the normalized factors $\sqrt{\eps_{pq}\eps_{2pq}}$ and $\sqrt{\eps_{ps}\eps_{2ps}}$ modulo $w$, multiplies them to obtain $\widetilde{\Theta}_{w}$, and then reads off $\mu$ from one Legendre symbol.
\end{remark}

\section{Refined local data and the failure of classification by $\mathcal D(p,q,s)$}\label{sec:localdata}

We now isolate the genuinely new local bit and show that the standard residue data $\mathcal D(p,q,s)$ does not determine it.

\begin{definition}\label{def:local-data}
For distinct odd primes $p,q,s$, define
\[
\mathcal D(p,q,s):=
\Bigl(
p\bmod 8,\ q\bmod 8,\ s\bmod 8,\ \leg{q}{p},\ \leg{s}{p},\ \leg{q}{s}
\Bigr).
\]
Assume \eqref{eq:erratum-hyp} and one of the residue patterns in \eqref{eq:example-branches}, and set
\[
K:=L^{+}=\Q(\sqrt2,\sqrt{pq},\sqrt{ps}).
\]
Let $\Theta\in K^\times$ be the normalized element from \eqref{eq:Theta-normalized}, and let
\[
\delta(p,q,s)\in\{0,1\}
\]
be defined by
\[
\mu=\eps_{pq}^{\,\delta(p,q,s)},
\]
where $\mu$ is the unique element such that $\mu\Theta\in K^{\times2}$.
Equivalently, by Theorem~\ref{thm:hilbert-criterion},
\[
\delta(p,q,s)=
\begin{cases}
0,& \text{if }\Theta\in K^{\times2},\\
1,& \text{if }\eps_{pq}\Theta\in K^{\times2}.
\end{cases}
\]
We then define the \emph{refined local datum}
\[
\mathcal D^\sharp(p,q,s):=\bigl(\mathcal D(p,q,s),\delta(p,q,s)\bigr).
\]
\end{definition}

\begin{corollary}\label{cor:complete-classification}
Assume \eqref{eq:erratum-hyp} and one of the two residue patterns of \cite[Thm.~3.1(1)]{ElHamam2025Erratum}.
Then an FSU of $L^{+}$ is
\[
\bigl\{
\eps_2,\ \eps_{pq},\ \sqrt{\eps_{pq}\eps_{ps}},\ \sqrt{\eps_{pq}\eps_{qs}},\
\sqrt{\eps_{2qs}},\ \sqrt{\eps_{pq}\eps_{2pq}},\ \xi
\bigr\},
\]
where
\[
\xi=\sqrt{\mu\Theta},
\qquad
\mu=\eps_{pq}^{\,\delta(p,q,s)}.
\]
Hence the corrected degree-$8$ classification becomes complete once one adjoins the single local bit $\delta(p,q,s)$.
\end{corollary}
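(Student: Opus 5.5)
This corollary is essentially a repackaging of Proposition~\ref{thm:erratum}(c) together with Definition~\ref{def:local-data}, so the plan is to verify that the two pieces match rather than to redo any unit-index computation.

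First, I would invoke Proposition~\ref{thm:erratum}(b). Under hypothesis \eqref{eq:erratum-hyp} and either residue pattern in \eqref{eq:example-branches}, it gives a unique $\mu\in\{1,\eps_{pq}\}$ with $\mu\Theta\in L^{+\times2}$. Here $\Theta$ is the $\iota_0$-normalized element \eqref{eq:Theta-normalized}, and Remark~\ref{rem:theta-xi} guarantees that this is the candidate singled out in \cite{ElHamam2025Erratum}. By Definition~\ref{def:local-data}, $\delta(p,q,s)\in\{0,1\}$ is defined exactly by $\mu=\eps_{pq}^{\,\delta(p,q,s)}$. This is well defined because $1\neq\eps_{pq}$, and uniqueness in (b) makes $\delta$ a function of $(p,q,s)$. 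Consequently $\mu\Theta=\eps_{pq}^{\,\delta}\Theta$ is a square, and any $\xi\in L^{+\times}$ with $\xi^2=\mu\Theta$ exists. Such a $\xi$ is a unit, since $\xi^2$ is a product of units.

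Second, I would apply Proposition~\ref{thm:erratum}(c) verbatim to this $\xi$. It states that $\eps_2,\eps_{pq},\sqrt{\eps_{pq}\eps_{ps}},\sqrt{\eps_{pq}\eps_{qs}},\sqrt{\eps_{2qs}},\sqrt{\eps_{pq}\eps_{2pq}},\xi$ form an FSU with $E_{L^{+}}=\{\pm1\}\langle\eta_1,\dots,\eta_7\rangle$. The sign ambiguity of $\xi$ is harmless, because $-1$ is absorbed in $\{\pm1\}$.

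Finally, for the "hence" sentence, I would note that every ingredient of the list is explicit except the choice of $\mu$, and that choice is encoded by the single bit $\delta(p,q,s)$. This bit is computable by Theorem~\ref{thm:hilbert-criterion}, or Theorem~\ref{thm:split-prime}, at any place $w$ with $\eps_{pq}\notin K_w^{\times2}$; such a place exists by Remark~\ref{rem:find-w}, given that $\eps_{pq}$ is not a global square. That $\eps_{pq}$ is not a global square follows from part~(a): if it were a square, $\Theta$ and $\eps_{pq}\Theta$ would be squares simultaneously or not at all, contradicting "exactly one".

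The only step needing care is this well-definedness and independence of choices: that $\delta$ does not depend on the sign conventions for the square roots, which Remark~\ref{rem:theta-xi} settles. Beyond that, no genuine obstacle is expected, since the index-one content is entirely imported from Proposition~\ref{thm:erratum}(c).
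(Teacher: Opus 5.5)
Your proposal is correct and follows the paper's own route. The paper's proof is the one-line observation that the corollary is exactly Proposition~\ref{thm:erratum} combined with Definition~\ref{def:local-data}, and your extra checks (well-definedness of $\delta$, $\xi$ being a unit, and $\eps_{pq}\notin L^{+\times2}$ following from part~(a) so that Remark~\ref{rem:find-w} applies) are sound elaborations of that same argument.
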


\begin{proof}
This is exactly Proposition~\ref{thm:erratum} together with Definition~\ref{def:local-data}.
\end{proof}

The following result points out that this additional bit is essential.

\begin{proposition}\label{prop:noncollapse}
There exist triples $(p,q,s)$ and $(p',q',s')$ satisfying the hypotheses of Proposition~\ref{thm:erratum} such that
\[
\mathcal D(p,q,s)=\mathcal D(p',q',s')
\]
but
\[
\delta(p,q,s)\neq \delta(p',q',s').
\]
In particular, no complete classification in this family can depend only on $\mathcal D(p,q,s)$.
\end{proposition}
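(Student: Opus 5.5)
The proof is by explicit computation: I would exhibit two concrete triples and evaluate $\delta$ for each via the split-prime criterion of Theorem~\ref{thm:split-prime}. First I fix a target datum, say $p\equiv 7$, $q\equiv s\equiv 3 \pmod 8$ with $\bigl(\leg{q}{p},\leg{s}{p},\leg{q}{s}\bigr)=(1,1,1)$ (or the pattern $(-1,-1,1)$ if that gives smaller examples). Then I search over small primes for several triples $(p,q,s)$ realizing exactly this $\mathcal D(p,q,s)$. For each candidate triple I compute the fundamental units $\eps_{pq},\eps_{2pq},\eps_{ps},\eps_{2ps}$ by continued fractions. I also write down the square roots $\sqrt{\eps_d\eps_{2d}}\in L^{+}$ explicitly, normalized to be positive under $\iota_0$. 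In practice each of these lies in a biquadratic subfield $\Q(\sqrt2,\sqrt d)$ and has the standard form $\tfrac12(\alpha\sqrt{a}+\beta\sqrt{b})$ coming from the decomposition $\eps_d\pm1$ being twice a square times a divisor. I then verify that squaring recovers $\eps_d\eps_{2d}$.

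Next, for each triple I choose an auxiliary odd prime $t\nmid 2pqs$ that splits completely in $K$. This means $\leg{2}{t}=\leg{pq}{t}=\leg{ps}{t}=1$, so $t\equiv\pm1\pmod 8$ with the two further Legendre conditions. I fix square roots $r_2,r_{pq},r_{ps}\in\F_t$, which determines a place $w\mid t$. Reducing $\eps_{pq}$ modulo $w$, I check that $\leg{\widetilde{\eps_{pq}}}{t}=-1$, so that $\eps_{pq}\notin K_w^{\times2}$. If this fails I change the signs of the chosen roots (i.e.\ pick another $w$) or take another $t$; Remark~\ref{rem:find-w} guarantees a suitable $w$ exists, and I would just search. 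Then I reduce the two normalized factors modulo $w$, multiply to get $\widetilde\Theta_w$, and read $\delta(p,q,s)$ from $\leg{\widetilde\Theta_w}{t}$ by Theorem~\ref{thm:split-prime}. Since the hypotheses \eqref{eq:erratum-hyp} and \eqref{eq:example-branches} hold by construction, Proposition~\ref{thm:erratum}(a) supplies the dichotomy that the theorem needs.

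Running this over the triples sharing the same $\mathcal D$, I look for two with opposite Legendre outputs. Those give $(p,q,s)$ and $(p',q',s')$ with $\mathcal D(p,q,s)=\mathcal D(p',q',s')$ and $\delta(p,q,s)\neq\delta(p',q',s')$. The final sentence of the proposition is then immediate, since $\delta$ is not a function of $\mathcal D$.

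The main obstacle is the computational bookkeeping rather than any theory. Fundamental units for $d=2pq$ and $2ps$ can be very large, so I would do the reductions modulo $t$ directly from the continued-fraction recursion. The delicate point is evaluating $\sqrt{\eps_d\eps_{2d}}$ consistently with $\iota_0$: the positivity normalization must be reflected in the residues via the same choice of roots used to define $w$. A sign error would flip $\widetilde\Theta_w$ by $-1$, and since $\leg{-1}{t}=-1$ when $t\equiv 7\pmod 8$, this would corrupt $\delta$. To guard against this, I would prefer split primes $t\equiv1\pmod 8$, for which the sign is harmless. As an independent check, I would confirm each $\delta$ at a second split place.
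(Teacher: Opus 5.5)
Your method is the paper's: fix a datum $\mathcal D$, realize it by two triples, and read off $\delta$ at a split place via Theorem~\ref{thm:split-prime}, using the imported dichotomy. As written, however, the proposal does not prove the proposition.

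The statement is a pure existence claim, and its whole content is the witness pair. You never produce one. ``Run the procedure and look for two triples with opposite outputs'' presupposes the conclusion. Nothing in your argument guarantees that the search terminates. If $\delta$ were a function of $\mathcal D$, which is exactly what the proposition denies, it never would.

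What is missing is the concrete data. The paper supplies it with $(7,19,3)$ and $(7,3,59)$, both with $\mathcal D=(7,3,3,-1,-1,1)$.
\begin{itemize}
\item At $t=41$, with roots $\sqrt2\equiv17$, $\sqrt{133}\equiv16$, $\sqrt{21}\equiv12$: one finds $\eps_{133}\equiv29\pmod{41}$, a nonsquare, and $\widetilde{\Theta}_w\equiv18\cdot37\equiv10$, a square. So $\delta(7,19,3)=0$.
\item At $t=79$, with roots $\sqrt2\equiv9$, $\sqrt{21}\equiv10$, $\sqrt{413}\equiv27$: one finds $\eps_{21}\equiv17\pmod{79}$ and $\widetilde{\Theta}_w\equiv68\cdot20\equiv17$, both nonsquares. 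So $\delta(7,3,59)=1$.
\end{itemize}
Your plan becomes a proof only once explicit triples, units, normalized square roots and residues like these are written down and checked.

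Your computational precautions are sound. The paper's second witness uses $t=79\equiv7\pmod 8$, where $\leg{-1}{t}=-1$, so the $\iota_0$-normalization of $\Theta$ genuinely affects the answer there. Your preference for $t\equiv1\pmod 8$ removes that sensitivity. It even makes the explicit square-root expansions unnecessary: since $-1\in\F_t^{\times2}$, $\widetilde{\Theta}_w$ is a square if and only if the residue of $\eps_{pq}\eps_{2pq}\eps_{ps}\eps_{2ps}$ at $w$ is a fourth power in $\F_t^\times$. That test needs only the four units modulo $w$, which you can get from the continued-fraction recursion as you suggest.
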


\begin{proof}
Take
\[
(p,q,s)=(7,19,3),\qquad (p',q',s')=(7,3,59).
\]
A direct computation gives
\[
\mathcal D(7,19,3)=\mathcal D(7,3,59)=(7,3,3,-1,-1,1).
\]
This is again the $(-1,-1,1)$ branch from \eqref{eq:example-branches}.
In particular, the two triples satisfy the same congruence conditions and the same displayed Legendre-symbol data;
equivalently, they lie in the same concrete residue pattern from \eqref{eq:example-branches}.
By Example~\ref{ex:7193} below we have
\[
\delta(7,19,3)=0,
\]
whereas by Example~\ref{ex:7359} we have
\[
\delta(7,3,59)=1.
\]
Thus $\delta$ is not a function of $\mathcal D$ alone.
\end{proof}

\section{General local-certification principles}\label{sec:general}

We now record a hierarchy of simple but useful local principles which place the one-bit phenomenon for $L^{+}$ into a broader squareclass-duality framework.
The linear input is a dual-space statement, while the arithmetic situations produced by norm tables are usually affine torsors rather than linear subspaces.
Throughout this section we view $K^\times/K^{\times2}$ additively as an $\F_2$-vector space.

\begin{proposition}\label{thm:general-framework}
Let $K$ be a number field and let
\[
W\subset K^\times/K^{\times2}
\]
be an $\F_2$-subspace of dimension $r$.
For a finite place $w$ of $K$ and an element $b\in K_w^\times/K_w^{\times2}$, define
\[
\lambda_{w,b}(u)=
\begin{cases}
0,& \text{if }\Hilb{u}{b}_{w}=1,\\
1,& \text{if }\Hilb{u}{b}_{w}=-1.
\end{cases}
\]
Let
\[
\mathscr L(W):=\{\lambda_{w,b}|_{W}:\ w \text{ finite},\ b\in K_w^\times/K_w^{\times2}\}\subseteq W^\vee.
\]
Then
\[
\operatorname{span}_{\F_2}\mathscr L(W)=W^\vee.
\]
In particular, there exist finite places $w_1,\dots,w_r$ of $K$ and elements
\[
b_i\in K_{w_i}^\times/K_{w_i}^{\times2}
\qquad (1\le i\le r)
\]
such that the map
\[
\Lambda_W:W\longrightarrow \F_2^r,\qquad
u\longmapsto \bigl(\lambda_i(u)\bigr)_{i=1}^r
\]
is an isomorphism, where $\lambda_i=\lambda_{w_i,b_i}|_{W}$.
Equivalently, an $r$-dimensional residual squareclass indeterminacy can be resolved by $r$ independent local Hilbert-symbol bits.
\end{proposition}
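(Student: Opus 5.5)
The plan is to reduce the spanning statement to an injectivity statement via linear-algebra duality, and then prove injectivity using local nondegeneracy of the Hilbert pairing together with a global "local squares are global squares" input.

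\textbf{Step 1 (duality reduction).} For a finite-dimensional $\F_2$-space $W$, a subset $\mathscr L\subseteq W^\vee$ spans $W^\vee$ if and only if its common kernel $\bigcap_{\lambda\in\mathscr L}\ker\lambda$ is zero. Indeed, the annihilator in $W$ of $\operatorname{span}\mathscr L$ is exactly this common kernel, and $\dim$ of a subspace of $W^\vee$ plus $\dim$ of its annihilator equals $r$. So it suffices to show the following: if $u\in W$ satisfies $\lambda_{w,b}(u)=0$ for all finite $w$ and all $b$, then $u=0$.

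\textbf{Step 2 (local squares).} Fix such a $u$, represented by $x\in K^\times$, and a finite place $w$. Then $\Hilb{x}{b}_w=1$ for every $b\in K_w^\times/K_w^{\times2}$. By the perfectness of the local Hilbert pairing, used exactly as in Step~1 of the proof of Theorem~\ref{thm:hilbert-criterion}, this gives $x\in K_w^{\times2}$. Hence $x$ is a local square at every finite place of $K$.

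\textbf{Step 3 (local-to-global).} This is the main step. I will show that an element of $K^\times$ which is a square in $K_w$ for all finite $w$ is a global square. Suppose $x\notin K^{\times2}$. Then $M=K(\sqrt x)/K$ is a genuine quadratic extension. Every finite place of $K$ splits completely in $M$, since $x\in K_w^{\times2}$ means $M\otimes_K K_w\simeq K_w\times K_w$. This contradicts the Chebotarev density theorem (the same input used in Remark~\ref{rem:find-w}), which gives a positive density of finite places that are inert in $M$. Equivalently, one can invoke Grunwald--Wang, or note directly that a nontrivial extension cannot have a set of split primes of density $1$. Note that real places are never needed; only finite places enter, which matches the definition of $\mathscr L(W)$. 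So $x\in K^{\times2}$ and $u=0$, which proves $\operatorname{span}\mathscr L(W)=W^\vee$.

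\textbf{Step 4 (choice of $r$ functionals).} Since $\mathscr L(W)$ spans the $r$-dimensional space $W^\vee$, it contains a basis $\lambda_1,\dots,\lambda_r$, where $\lambda_i=\lambda_{w_i,b_i}|_W$. The map $\Lambda_W$ is the evaluation against a dual basis, so it is injective: if all $\lambda_i(u)=0$, then every element of $W^\vee$ vanishes on $u$, so $u=0$. Being an injective linear map between spaces of equal dimension $r$, $\Lambda_W$ is an isomorphism.

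The only real obstacle is Step~3. Its content is the Chebotarev argument, which is already quoted in Remark~\ref{rem:find-w} and applies verbatim with $\eps_{pq}$ replaced by $x$; the remaining steps are routine linear algebra.
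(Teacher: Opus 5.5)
Your proposal is correct and is essentially the paper's proof. It uses the same two ingredients: Chebotarev applied to $K(\sqrt{x})/K$, which yields a finite place where $x$ is not a local square, and perfectness of the local Hilbert pairing. It combines them with the same annihilator argument in $W^\vee$.

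The only difference is presentation. You argue contrapositively that an element of the common kernel is a local square at every finite place and hence a global square. The paper instead shows directly that each nonzero $u\in W$ is detected by some $\lambda_{w,b}$.
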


\begin{proof}
We first show that $\mathscr L(W)$ separates points of $W$.
Let $0\neq u\in W$, and choose a representative $\widetilde u\in K^\times$ of the class $u$.
Since $u\neq 0$ in $K^\times/K^{\times2}$, the extension
\[
L_u:=K(\sqrt{\widetilde u})/K
\]
is a nontrivial quadratic Galois extension.

Let $\tau$ denote the nontrivial element of $\Gal(L_u/K)$.
By the Chebotarev density theorem \cite[Ch.~VII, \S13, Thm.~13.4]{NeukirchANT},
the set of finite unramified places $w$ of $K$ with Frobenius class
\[
\left(\frac{L_u/K}{w}\right)=(\tau)
\]
has density
\[
\frac{\#(\tau)}{\#\Gal(L_u/K)}=\frac12,
\]
hence is nonempty.
Choose such a place $w$.
Then $w$ does not split completely in $L_u/K$.
Since $[L_u:K]=2$, this is equivalent to
\[
\widetilde u\notin K_w^{\times2}.
\]

Now the Hilbert pairing on $K_w^\times/K_w^{\times2}$ is perfect, so there exists
$b\in K_w^\times/K_w^{\times2}$ such that
\[
\Hilb{\widetilde u}{b}_w=-1.
\]
Hence $\lambda_{w,b}|_W\in\mathscr L(W)$ is nonzero on $u$.
Therefore $\mathscr L(W)$ separates points of $W$.

Let $U:=\operatorname{span}_{\F_2}\mathscr L(W)\subseteq W^\vee$.
If $U\neq W^\vee$, then its annihilator
\[
U^\perp:=\{x\in W:\ \lambda(x)=0\text{ for all }\lambda\in U\}
\]
would be a nonzero subspace of $W$.
But every nonzero element of $W$ is detected by some functional in $\mathscr L(W)\subseteq U$,
contradiction.
Thus $U=W^\vee$.

Since $\dim_{\F_2}(W^\vee)=r$, we may choose
\[
\lambda_1,\dots,\lambda_r\in \mathscr L(W)
\]
forming a basis of $W^\vee$.
Write $\lambda_i$ in the form determined by some pair $(w_i,b_i)$.
Then the induced map
\[
\Lambda_W=(\lambda_1,\dots,\lambda_r):W\to \F_2^r
\]
is an isomorphism because its coordinate functionals form a basis of the dual space.
\end{proof}

\begin{theorem}\label{thm:affine-framework}
Let
\[
W\subset K^\times/K^{\times2}
\]
be an $\F_2$-subspace of dimension $r$, and let $u_0\in K^\times/K^{\times2}$.
Choose local Hilbert-symbol functionals
\[
\lambda_1,\dots,\lambda_r
\]
as in Proposition~\ref{thm:general-framework}, so that
\[
\Lambda_W=(\lambda_1,\dots,\lambda_r):W\longrightarrow \F_2^r
\]
is an isomorphism.
Then the restriction of the same local functionals to the affine coset $u_0+W$ defines an affine isomorphism
\[
\Lambda_{u_0+W}:u_0+W\longrightarrow \Lambda_{u_0+W}(u_0)+\F_2^r,
\qquad
u\longmapsto \bigl(\lambda_i(u)\bigr)_{i=1}^r.
\]
Equivalently, the $2^r$ candidates in $u_0+W$ are uniquely determined by $r$ independent local Hilbert-symbol bits.
\end{theorem}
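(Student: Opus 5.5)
The plan is to reduce everything to the linearity of the functionals $\lambda_i$ on all of $K^\times/K^{\times2}$ together with the isomorphism $\Lambda_W$ supplied by Proposition~\ref{thm:general-framework}.

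First, I will note that each $\lambda_i=\lambda_{w_i,b_i}$ is defined on the whole space $K^\times/K^{\times2}$, not only on $W$. It is the composite of the localization map $K^\times/K^{\times2}\to K_{w_i}^\times/K_{w_i}^{\times2}$ with the Hilbert-symbol functional $\overline u\mapsto \Hilb{u}{b_i}_{w_i}$, read additively in $\F_2$. Bilinearity of the Hilbert symbol therefore makes $\lambda_i$ an $\F_2$-linear functional on $K^\times/K^{\times2}$. This is also what makes $\lambda_i(u_0)$ meaningful when $u_0\notin W$. So the map $\Lambda:=(\lambda_1,\dots,\lambda_r):K^\times/K^{\times2}\to\F_2^r$ is linear, and its restriction to $W$ is $\Lambda_W$.

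Next, for $u=u_0+x$ with $x\in W$, linearity gives
\[
\Lambda(u)=\Lambda(u_0)+\Lambda_W(x).
\]
Hence $\Lambda_{u_0+W}$ is the composite of the translation $u_0+W\to W$, $u\mapsto u-u_0$, with the linear isomorphism $\Lambda_W$, followed by translation by $\Lambda(u_0)$. Each of these three maps is a bijection compatible with the affine structure, so $\Lambda_{u_0+W}$ is an affine isomorphism onto $\Lambda(u_0)+\F_2^r$. Since $\F_2^r$ is the whole space, this target set is just $\F_2^r$ itself, written as an affine torsor.

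For the injectivity claim ("the $2^r$ candidates are uniquely determined"), I will argue directly. If $\Lambda(u)=\Lambda(u')$ for $u,u'\in u_0+W$, then $u-u'\in W$ and $\Lambda_W(u-u')=0$, so $u=u'$ by injectivity of $\Lambda_W$. Counting then gives bijectivity, since both sides have $2^r$ elements.

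There is no real obstacle here. The only point requiring care is the first step: one must check that the functionals are genuinely defined and linear on the full squareclass group, so that they can be evaluated on the coset representative $u_0$, which need not lie in $W$. Once that is settled, the rest is formal affine linear algebra over $\F_2$.
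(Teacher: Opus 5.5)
Your proposal is correct and follows the paper's proof. Both use bilinearity of the Hilbert-symbol functionals to write $\Lambda_{u_0+W}(u_0+x)=\Lambda(u_0)+\Lambda_W(x)$, so the map is a translate of the isomorphism $\Lambda_W$; your explicit check that each $\lambda_i$ is defined and linear on all of $K^\times/K^{\times2}$, so that $\lambda_i(u_0)$ makes sense when $u_0\notin W$, spells out a point the paper uses only implicitly.
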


\begin{proof}
For $u=u_0+w$ with $w\in W$, bilinearity of the Hilbert-symbol functionals gives
\[
\lambda_i(u)=\lambda_i(u_0)+\lambda_i(w)\qquad(1\le i\le r).
\]
Hence
\[
\Lambda_{u_0+W}(u_0+w)=\Lambda_{u_0+W}(u_0)+\Lambda_W(w).
\]
Since $\Lambda_W:W\to\F_2^r$ is an isomorphism by Proposition~\ref{thm:general-framework}, the restriction to
$u_0+W$ is a translation of an isomorphism and therefore is an affine isomorphism.
\end{proof}

\begin{theorem}\label{thm:finite-test-set}
Let
\[
A\subset K^\times/K^{\times2}
\]
be a finite subset.
If $A\neq\emptyset$, choose $u_0\in A$ and put
\[
W:=\langle u-u_0:\ u\in A\rangle_{\F_2},
\qquad
m:=\dim_{\F_2}W.
\]
Then $m\le |A|-1$, and there exist local Hilbert-symbol functionals
\[
\lambda_1,\dots,\lambda_m
\]
(each coming from some finite place $w_i$ and some
$b_i\in K_{w_i}^\times/K_{w_i}^{\times2}$)
such that the map
\[
A\longrightarrow \F_2^m,\qquad
u\longmapsto (\lambda_1(u),\dots,\lambda_m(u))
\]
is injective.

In particular, if $S$ is the finite set of places appearing among the $w_i$ and,
for each $w\in S$, one fixes a finite subset
$\mathcal B_w\subset K_w^\times$ whose image is an $\F_2$-basis of
$K_w^\times/K_w^{\times2}$, then the combined local test-vector map
\[
\mathbf h_S:A\longrightarrow \prod_{w\in S}\{\pm1\}^{|\mathcal B_w|},
\qquad
u\longmapsto (\mathbf h_w(u))_{w\in S}
\]
is injective.
\end{theorem}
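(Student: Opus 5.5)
The plan is to reduce to the affine certification theorem, Theorem~\ref{thm:affine-framework}, applied to the coset $u_0+W$, which contains $A$. First, the dimension bound is immediate. $W$ is spanned by the $|A|$ vectors $u-u_0$ with $u\in A$, and the vector with $u=u_0$ is zero. Hence $W$ is spanned by at most $|A|-1$ vectors, so $m\le |A|-1$. The case $A=\emptyset$ is vacuous, and if $m=0$ then $A=\{u_0\}$ and the empty map is trivially injective.

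Next, I apply Proposition~\ref{thm:general-framework} to the $m$-dimensional subspace $W$. This produces finite places $w_1,\dots,w_m$ and classes $b_i\in K_{w_i}^\times/K_{w_i}^{\times2}$ such that $\Lambda_W=(\lambda_1,\dots,\lambda_m):W\to\F_2^m$ is an isomorphism, where $\lambda_i=\lambda_{w_i,b_i}$. These functionals are defined on all of $K^\times/K^{\times2}$, by the same Hilbert-symbol formula, and they are additive by bilinearity of $\Hilb{\cdot}{\cdot}_{w_i}$. Theorem~\ref{thm:affine-framework} then says that $\Lambda$ restricted to $u_0+W$ is an affine bijection. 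Since every $u\in A$ satisfies $u=u_0+(u-u_0)\in u_0+W$, the restriction to $A$ is injective.

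For the second statement, let $S=\{w_1,\dots,w_m\}$ with the given bases $\mathcal B_w$. I would show that each $\lambda_i(u)$ is a function of $\mathbf h_{w_i}(u)$. Write $b_i=\prod_{b\in\mathcal B_{w_i}}b^{c_b}$ modulo squares, with $c_b\in\F_2$. Bilinearity then gives
\[
\Hilb{u}{b_i}_{w_i}=\prod_{b\in\mathcal B_{w_i}}\Hilb{u}{b}_{w_i}^{c_b},
\]
so $\lambda_i(u)$ is determined by $\mathbf h_{w_i}(u)$. Now suppose $\mathbf h_S(u)=\mathbf h_S(u')$ for $u,u'\in A$. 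Then all $\lambda_i$ agree on $u$ and $u'$, and the first part forces $u=u'$.

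There is no real obstacle here. The only point needing care is that the functionals $\lambda_{w,b}$ are well defined on global squareclasses. This holds because a global square is a local square, and the pairing is bilinear in the first variable, so evaluating $\lambda_{w,b}$ on $u$ through its image in $K_w^\times/K_w^{\times2}$ is legitimate.
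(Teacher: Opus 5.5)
Your proof is correct and follows essentially the same route as the paper: you observe that $A\subseteq u_0+W$, invoke the affine certification theorem to get injectivity on $A$, and then pass to the test vectors $\mathbf h_S$. The only small differences are these. You recover each $\lambda_i$ from $\mathbf h_{w_i}$ by expanding $b_i$ in the basis $\mathcal B_{w_i}$ and using bilinearity in the second variable, whereas the paper uses nondegeneracy to recover the whole local squareclass. You also prove explicitly the bound $m\le |A|-1$, which the paper leaves implicit.
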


\begin{proof}
If $A=\emptyset$, there is nothing to prove, so assume $A\neq\emptyset$ and choose $u_0\in A$.
The set $A$ is contained in the affine coset $u_0+W$.
By Theorem~\ref{thm:affine-framework}, there exist $m$ local Hilbert-symbol
functionals whose restriction to $u_0+W$ is an affine isomorphism onto an affine copy of $\F_2^m$.
Hence their restriction to $A$ is injective.

For the second assertion, suppose that $u,v\in A$ satisfy
\[
\mathbf h_S(u)=\mathbf h_S(v).
\]
Fix $w\in S$.
Since $\mathcal B_w$ maps to a basis of $K_w^\times/K_w^{\times2}$, the Hilbert-symbol vector
$\mathbf h_w(\cdot)$ determines the local squareclass uniquely by the nondegeneracy of the local Hilbert pairing.
Therefore $u$ and $v$ have the same image in $K_w^\times/K_w^{\times2}$ for every $w\in S$.
In particular, each selected local functional $\lambda_i$ takes the same value on $u$ and $v$.
By the injectivity of the map: 
$$u\mapsto (\lambda_1(u),\dots,\lambda_m(u))$$ 
on $A$, we conclude that $u=v$.
\end{proof}

\begin{remark}
Proposition~\ref{thm:general-framework} is the linear dual-space statement, while
Theorem~\ref{thm:affine-framework} is the affine torsor version more directly adapted to norm-table computations.
Theorem~\ref{thm:hilbert-criterion} is a one-place local squarehood test for deciding whether a specified candidate is locally trivial.
Corollary~\ref{cor:single-functional} is the genuine affine one-bit instance, with affine candidate set
\[
[\Theta]+\langle[\eps_{pq}]\rangle=\{[\Theta],[\eps_{pq}\Theta]\}
\subset L^{+\times}/L^{+\times2}.
\]
Theorem~\ref{thm:finite-test-set} shows that even when a computation yields only a finite candidate list with no preferred affine parametrization, finitely many local Hilbert test vectors still suffice to separate the candidates.
The same statements apply to residual-choice spaces arising in CM extensions such as
$L=L^{+}(\sqrt{-\ell})$.
\end{remark}

\section{Explicit numerical examples}\label{sec:examples}

In this section we compute the residual bit $\delta(p,q,s)$ explicitly in several examples.
In each case, we choose a split rational prime $t$ and use Theorem~\ref{thm:split-prime}.

Throughout this section, whenever a quantity of the form $\sqrt{\eps_d\eps_{2d}}$ is written explicitly, we use the normalized root fixed in Section~\ref{sec:revisit}, namely the unique root that is positive under the distinguished real embedding sending every occurring radical to its positive real value.

\begin{example}\label{ex:7193}
Take
\[
(p,q,s)=(7,19,3).
\]
Then
\[
p\equiv 7\pmod 8,\qquad q\equiv s\equiv 3\pmod 8,
\]
and
\[
\leg{19}{7}=\leg{3}{7}=-1,\qquad \leg{19}{3}=1.
\]
Thus the classical residue datum is
\[
\mathcal D(7,19,3)=(7,3,3,-1,-1,1).
\]
This is the $(-1,-1,1)$ branch from \eqref{eq:example-branches}.

The relevant quadratic fundamental units are
\[
\eps_{133}=2588599+224460\sqrt{133},\qquad
\eps_{266}=685+42\sqrt{266},
\]
\[
\eps_{21}=55+12\sqrt{21},\qquad
\eps_{42}=13+2\sqrt{42}.
\]
A direct expansion gives
\[
\sqrt{\eps_{133}\eps_{266}}
=
21070+14877\sqrt2+1827\sqrt{133}+1290\sqrt{266},
\]
\[
\sqrt{\eps_{21}\eps_{42}}
=
14+9\sqrt2+3\sqrt{21}+2\sqrt{42}.
\]
Hence
\[
\Theta=
\sqrt{\eps_{133}\eps_{266}}\cdot \sqrt{\eps_{21}\eps_{42}}.
\]

Choose the split auxiliary prime $t=41$.
Since
\[
\leg{2}{41}=\leg{133}{41}=\leg{21}{41}=1,
\]
the prime $41$ splits completely in
\[
K=\Q(\sqrt2,\sqrt{133},\sqrt{21}).
\]
Fix the square roots
\[
\sqrt2\equiv 17,\qquad \sqrt{133}\equiv 16,\qquad \sqrt{21}\equiv 12\pmod{41}.
\]
Then
\[
\sqrt{\eps_{133}\eps_{266}}
\equiv
21070+14877\cdot 17+1827\cdot 16+1290\cdot 17\cdot 16
\equiv 18 \pmod{41},
\]
and
\[
\sqrt{\eps_{21}\eps_{42}}
\equiv
14+9\cdot 17+3\cdot 12+2\cdot 17\cdot 12
\equiv 37 \pmod{41}.
\]
Therefore
\[
\widetilde{\Theta}_{w}\equiv 18\cdot 37\equiv 10\pmod{41}.
\]
Since
\[
\leg{10}{41}=1,
\]
Theorem~\ref{thm:split-prime} shows that $\Theta$ is a square in $K_w$.

It remains to check that $\eps_{133}$ is not a square in $K_w$.
Reducing modulo $41$ gives
\[
\eps_{133}\equiv 2588599+224460\cdot 16\equiv 29\pmod{41},
\]
and
\[
\leg{29}{41}=-1.
\]
Hence $\eps_{133}\notin K_w^{\times2}$.
Applying Theorem~\ref{thm:split-prime}, we conclude that
\[
\delta(7,19,3)=0,\qquad \mu=1.
\]
\end{example}

\begin{example}\label{ex:71143}
Take
\[
(p,q,s)=(7,11,43).
\]
Then
\[
p\equiv 7\pmod 8,\qquad q\equiv s\equiv 3\pmod 8,
\]
and
\[
\leg{11}{7}=\leg{43}{7}=\leg{11}{43}=1.
\]
Thus
\[
\mathcal D(7,11,43)=(7,3,3,1,1,1).
\]
This is the $(1,1,1)$ branch from \eqref{eq:example-branches}.

The relevant quadratic units are
\[
\eps_{77}=351+40\sqrt{77},\qquad
\eps_{154}=21295+1716\sqrt{154},
\]
\[
\eps_{301}=5883392537695+339113108232\sqrt{301},\qquad
\eps_{602}=687+28\sqrt{602}.
\]
A direct expansion gives
\[
\sqrt{\eps_{77}\eps_{154}}
=
1365+968\sqrt2+156\sqrt{77}+110\sqrt{154},
\]
\[
\sqrt{\eps_{301}\eps_{602}}
=
31764789+22493816\sqrt2+1830892\sqrt{301}+1296522\sqrt{602}.
\]

Choose the split prime $t=23$.
Since
\[
\leg{2}{23}=\leg{77}{23}=\leg{301}{23}=1,
\]
the prime $23$ splits completely in
\[
K=\Q(\sqrt2,\sqrt{77},\sqrt{301}).
\]
Fix
\[
\sqrt2\equiv 5,\qquad \sqrt{77}\equiv 10,\qquad \sqrt{301}\equiv 5\pmod{23}.
\]
Then
\[
\sqrt{\eps_{77}\eps_{154}}\equiv 17\pmod{23},
\qquad
\sqrt{\eps_{301}\eps_{602}}\equiv 19\pmod{23},
\]
so
\[
\widetilde{\Theta}_{w}\equiv 17\cdot 19\equiv 1\pmod{23}.
\]
Hence
\[
\leg{\widetilde{\Theta}_{w}}{23}=1.
\]

Moreover,
\[
\eps_{77}\equiv 351+40\cdot 10\equiv 15\pmod{23},
\qquad
\leg{15}{23}=-1,
\]
so $\eps_{77}\notin K_w^{\times2}$.
Therefore Theorem~\ref{thm:split-prime} gives
\[
\delta(7,11,43)=0,\qquad \mu=1.
\]
\end{example}

\begin{example}\label{ex:7359}
Take
\[
(p,q,s)=(7,3,59).
\]
Then
\[
p\equiv 7\pmod 8,\qquad q\equiv s\equiv 3\pmod 8,
\]
and
\[
\leg{3}{7}=\leg{59}{7}=-1,\qquad \leg{3}{59}=1.
\]
Thus
\[
\mathcal D(7,3,59)=(7,3,3,-1,-1,1),
\]
which is \emph{the same} classical residue datum as in Example~\ref{ex:7193}.

The relevant quadratic units are
\[
\eps_{21}=55+12\sqrt{21},\qquad
\eps_{42}=13+2\sqrt{42},
\]
\[
\eps_{413}=113399+5580\sqrt{413},\qquad
\eps_{826}=222239304685+7732694382\sqrt{826}.
\]
A direct expansion gives
\[
\sqrt{\eps_{21}\eps_{42}}
=
14+9\sqrt2+3\sqrt{21}+2\sqrt{42},
\]
\[
\sqrt{\eps_{413}\eps_{826}}
=
79375590+56126523\sqrt2+3905783\sqrt{413}+2761830\sqrt{826}.
\]

Choose the split prime $t=79$.
Since
\[
\leg{2}{79}=\leg{21}{79}=\leg{413}{79}=1,
\]
the prime $79$ splits completely in
\[
K=\Q(\sqrt2,\sqrt{21},\sqrt{413}).
\]
Fix
\[
\sqrt2\equiv 9,\qquad \sqrt{21}\equiv 10,\qquad \sqrt{413}\equiv 27\pmod{79}.
\]
Then
\[
\sqrt{\eps_{21}\eps_{42}}\equiv 68\pmod{79},
\qquad
\sqrt{\eps_{413}\eps_{826}}\equiv 20\pmod{79},
\]
so
\[
\widetilde{\Theta}_{w}\equiv 68\cdot 20\equiv 17\pmod{79}.
\]
Now
\[
\leg{17}{79}=-1.
\]
Hence $\Theta\notin K_w^{\times2}$.

On the other hand,
\[
\eps_{21}\equiv 55+12\cdot 10\equiv 17\pmod{79},
\qquad
\leg{17}{79}=-1,
\]
so $\eps_{21}\notin K_w^{\times2}$ and
\[
\widetilde{\eps_{21}\Theta}_{w}\equiv 17\cdot 17\equiv 289\equiv 52\pmod{79}.
\]
Since
\[
52\equiv 17^2 \pmod{79},
\]
we obtain
\[
\leg{\widetilde{\eps_{21}\Theta}_{w}}{79}=1.
\]
Applying Theorem~\ref{thm:split-prime}, we conclude that
\[
\delta(7,3,59)=1,\qquad \mu=\eps_{21}.
\]
\end{example}

\begin{remark}
Examples~\ref{ex:7193} and \ref{ex:7359} prove Proposition~\ref{prop:noncollapse}: the same classical datum
$\mathcal D=(7,3,3,-1,-1,1)$ can lead to opposite residual bits.
This is the concrete obstruction to any classification based solely on the congruence and Legendre-symbol data.
\end{remark}


\end{document}